\definecolor{lstbgcolor}{rgb}{0.9,0.9,0.9}
\newtheorem{theorem}{Theorem}[section]
\newtheorem{lemma}[theorem]{Lemma}
\newtheorem{remark}[theorem]{Remark}
\newtheorem{definition}[theorem]{Definition}
\newtheorem{prop}[theorem]{Proposition}
\newtheorem{corollary}[theorem]{Corollary}
\newtheorem{Conj}[theorem]{Conjecture}
\title{Dimension of the skein module of a Dehn filling}
\date{}
\author{EDWIN KITAEFF}
\begin{document}

\begin{abstract}
Given a knot $K$ and a generic slope $r$, we study the Kauffman bracket skein module (KBSM) $S(E_K (r) , \mathbb{Q} (A))$ of the Dehn filling $E_K (r)$ of slope $r$ along $K$, assuming that the KBSM $S(E_K , \mathbb{Q} [A^{\pm 1}])$ of the exterior $E_K$ of $K$ is finitely generated over $S(\partial E_K ,\mathbb{Q} [A^{\pm 1}])$.  As shown in \cite{Le06}, this condition is satisfied for $K$ a two-bridge knot. In this setting, we show that $\dim_{\mathbb{C}} (S_\zeta (E_K (r))) = \dim_{\mathbb{Q} (A)} (S (E_K (r)))$ for almost all primitive roots of unity $\zeta$ of order $2N$ with $N$ odd, and for almost all slopes $r$. When the character variety of a 3-manifold $M$ is finite, we also discuss the decomposition of $S_\zeta (M)$ in terms of localized skein modules. In particular, the dimension of the localized skein modules at a non-central point is the multiplicity of this point.
\end{abstract}
\maketitle
\section{Introduction}
\subsection{The Kauffman bracket skein module}
Let $M$ be a compact oriented 3-manifold, let $R$ be a commutative ring and let $A$ be a choice of an invertible element of $R$. The Kauffman bracket skein module $S_A (M,R)$, or simply skein module here, was introduced independently by Przytycki (\cite{Prz}) and Turaev (\cite{Turaev}). It is defined as the $R$-module spanned by the framed links in $M$ modulo isotopies and the Kauffman skein relations :
\begin{center}
\includegraphics[scale=0.3]{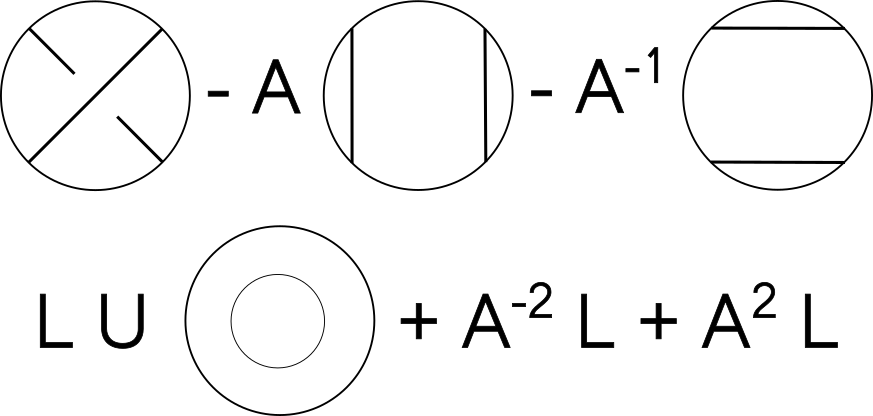}
\end{center}
For a surface $\Sigma$, we write $S_A (\Sigma ,R)$ instead of $S_A (\Sigma \times I ,R)$.\\
Furthermore, if the choice of $A$ is clear, we simply write $S(M,R)$, or even $S(M)$ if $R=\mathbb{Q} (A)$. For $\zeta\in\mathbb{C}^*$, we define $S_\zeta (M) := S_\zeta (M,\mathbb{C})$.

Although the definition of the skein module is quite simple, its computation is notoriously difficult. In fact, the skein module 
$S(M,\mathbb{Q} [A^{\pm 1}])$ is known only for a limited number of 3-manifolds, such as lens spaces \cite[Theorem 4]{HosPrz93}, $\mathbb{S}^2 \times \mathbb{S}^1$\cite{HosPrz95}, the exterior of a 2-bridge knot \cite[Theorem 2]{Le06}, $\mathbb{R}P^3 \# \mathbb{R}P^3$ \cite[Theorem 1]{Mro} and $(\mathbb{S}^2\times \mathbb{S}^1) \# (\mathbb{S}^2\times \mathbb{S}^1)$ \cite[\textsection 3]{BKSW}.\\
Regarding the skein module with coefficient in $\mathbb{Q} (A)$, we don't know many more examples of computations. However, the following striking result tells us about the structure of $S(M)$ :

\begin{theorem}\cite[Theorem 1]{GJS}\label{Thm:GJS}
For $M$ a closed 3-manifold, $S(M)$ is a finite dimensional $\mathbb{Q} (A)$-vector space.
\end{theorem}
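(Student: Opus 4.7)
The plan is to leverage the factorization homology framework, which identifies $S(M, \mathbb{Q}(A))$ with the factorization homology $\int_M \mathrm{Rep}_q(SL_2)$ of the ribbon category of representations of quantum $SL_2$ at generic parameter $q = A^2$. This recasts the problem in categorical terms: one needs to show that factorization homology of this category on a closed oriented 3-manifold is a finite-dimensional vector space over $\mathbb{Q}(A)$.

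The first step is an excision argument via a Heegaard splitting $M = H \cup_\Sigma H'$, which presents $\int_M \mathrm{Rep}_q(SL_2)$ as a relative tensor product of the handlebody factorization homologies over the surface factorization homology. At the level of skein modules this yields
\[
S(M, \mathbb{Q}(A)) \;\cong\; S(H, \mathbb{Q}(A)) \otimes_{S(\Sigma, \mathbb{Q}(A))} S(H', \mathbb{Q}(A)),
\]
where $S(\Sigma, \mathbb{Q}(A))$ is the Kauffman bracket skein algebra (multiplication induced by stacking in $\Sigma \times I$) and acts on each handlebody module via the boundary embedding. The next step is to control this relative tensor product: even though each factor is infinite-dimensional over $\mathbb{Q}(A)$, the balancing over $S(\Sigma, \mathbb{Q}(A))$ should drastically cut down the result.

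The heart of the argument is the categorical input of Gunningham-Jordan-Safronov: the notion of \emph{cp-rigidity} for a braided tensor category, which forces factorization homology on closed 3-manifolds to be dualizable, hence finite-dimensional over the ground field. The main obstacle is verifying that $\mathrm{Rep}_q(SL_2)$ at generic $q$ is cp-rigid despite having infinitely many simple objects; this requires producing dualizing data for the universal module action (equivalently, the adjoint action) and analyzing the M\"uger center of the category. The generic-parameter assumption is crucial here, as it guarantees the requisite semisimplicity and transversality of the classical limit while avoiding the representation-theoretic degeneracies that can occur at special values of $A$. Once cp-rigidity is in place, it feeds into the excision picture above to force the relative tensor product, and therefore $S(M, \mathbb{Q}(A))$, to be finite-dimensional.
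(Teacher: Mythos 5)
You should first be aware that the paper does not prove this statement: Theorem \ref{Thm:GJS} is imported verbatim from \cite{GJS}, and the author explicitly remarks that the proof there is non-constructive, pointing to \cite{BelDet} for an alternative argument. So there is no in-paper proof to compare against; what you have written is an outline of the original Gunningham--Jordan--Safronov argument, and at that coarse resolution it names the right ingredients (factorization homology of $\mathrm{Rep}_q(SL_2)$ at generic parameter, a Heegaard splitting, cp-rigidity as the finiteness mechanism).

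That said, your sketch has a genuine gap at the excision step. The displayed isomorphism $S(M,\mathbb{Q}(A))\cong S(H,\mathbb{Q}(A))\otimes_{S(\Sigma,\mathbb{Q}(A))}S(H',\mathbb{Q}(A))$, with the relative tensor product taken over the skein \emph{algebra} of the Heegaard surface, is not correct and is not what \cite{GJS} prove. A link in $M$ put in general position meets $\Sigma$ in points, so it does not decompose as a link in $H$ stacked through $\Sigma\times I$ onto a link in $H'$; the natural map from that relative tensor product to $S(M)$ fails even to be surjective in general, which is exactly why the skein algebra alone cannot implement the gluing. The actual argument must pass to the skein \emph{category} of $\Sigma$ (objects are finite collections of framed points on $\Sigma$, morphisms are tangles in $\Sigma\times I$) and to the internal skein modules of the two handlebodies, for which a genuine excision statement does hold; $S(M)$ is then recovered as a morphism space between the two internal handlebody modules. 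Finiteness then comes not from abstract ``dualizability of $\int_M\mathrm{Rep}_q(SL_2)$'' but from showing, using cp-rigidity of $\mathrm{Rep}_q(SL_2)$ at generic $q$, that these internal handlebody modules are compact (finitely generated projective) objects over the skein category of $\Sigma$, so that the relevant $\mathrm{Hom}$ space is finite dimensional. Your outline would need to be repaired along these lines before it approximates the actual proof.
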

Unfortunately, the proof of \cite{GJS} is not constructive and cannot be used to compute $S(M)$.\\ 
An alternative proof can be found in \cite{BelDet}, where, unlike \cite{GJS}, the dimension of $S(M)$ is bounded (from above) using an algorithm that computes an explicit set of generators. However, this set is often not optimal.\smallbreak
On the other hand, the interpretation of the dimension of $S(M)$ is the subject of the following conjecture :
\begin{Conj}\cite[Section 6.3]{GJS}\label{Conj:Instanton-Floer}
For a closed 3-manifold $M$, we have that $$\dim_{\mathbb{Q} (A)} (S(M))= \dim_{\mathbb{C} } HP^0_\# (M)$$ where $HP_\# (M)$ is the Abouzaid-Manolescu homology \cite{AboCip}.
\end{Conj}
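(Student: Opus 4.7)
The two sides of the equality live in very different worlds—an algebraic skein module on the left, a symplectic Floer-theoretic invariant on the right—so the plan is to funnel both through their common intermediate, the $SL_2 (\mathbb{C})$ character variety of $\pi_1 (M)$, and then propagate equality from explicit base cases using matching surgery formulas. At $A = -1$, the skein module $S_{-1} (M,\mathbb{C})$ is classically the coordinate ring of the character variety, and on the Abouzaid-Manolescu side the generators of $HP^*_\# (M)$ are indexed by points of (a suitable perturbation of) the same variety. The $\mathbb{Q}(A)$-dimension is then governed by the behaviour of $S(M,\mathbb{Q}[A^{\pm 1}])$ under extension of scalars, which is precisely the generic deformation away from $A = -1$.

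The first step is to verify the equality on a list of building blocks where both invariants admit direct calculation: lens spaces, $\mathbb{S}^2 \times \mathbb{S}^1$, $\mathbb{R}P^3 \# \mathbb{R}P^3$, the connect sum $(\mathbb{S}^2 \times \mathbb{S}^1) \# (\mathbb{S}^2 \times \mathbb{S}^1)$, and Dehn fillings of two-bridge knot exteriors. The left-hand dimensions are either in the cited literature or follow from Theorem~\ref{Thm:GJS} together with the Dehn-filling dimension formula announced in the abstract; for the right-hand side, the character variety of each such manifold is small enough that one can hope to reduce the Abouzaid-Manolescu count to an explicit Morse or Lagrangian intersection problem on a 0- or 1-dimensional variety.

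The second step is to propagate equality through a gluing formula. On the skein side, Dehn filling gives a quotient-type formula relating $\dim S(E_K (r))$ to $\dim S(E_K )$, and more general handle attachments fit into a TQFT-like cutting-and-pasting framework. On the Floer side, $HP^*_\#$ is expected to carry an exact triangle analogous to the instanton and Heegaard Floer surgery triangles; once such a triangle is established and shown to be compatible at the level of ranks with the skein gluing formula, both sides reduce to the base cases above.

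The main obstacle is precisely this compatibility. The Abouzaid-Manolescu theory is still young and no surgery triangle whose combinatorics match Dehn filling on the skein side is yet available in full generality. Worse still, the passage from $S_{-1}$ (or from $S_\zeta$ at a root of unity) back to the generic $S(M)$ involves semicontinuity and multiplicity jumps—exactly the localised-skein-module phenomenon discussed in the abstract—and one would need a parallel multiplicity-aware refinement on the Floer side. Controlling these multiplicity jumps simultaneously on both sides of the conjecture, rather than either side in isolation, is, I expect, the decisive technical difficulty.
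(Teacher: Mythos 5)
This statement is not a theorem of the paper but an open conjecture, attributed to Gunningham--Jordan--Safronov, and the paper supplies no proof of it --- indeed the author explicitly states that the computation and interpretation of $\dim_{\mathbb{Q}(A)} S(M)$ ``remain a very difficult and open problem.'' Your proposal is likewise not a proof: it is a research programme, and you candidly identify the two places where it breaks down. Concretely, the gaps are (i) no surgery exact triangle for $HP^*_\#$ compatible with the skein-theoretic Dehn-filling behaviour is known, so the ``propagate from building blocks'' step has no engine to run on; and (ii) even granting the base cases, the passage from $S_{-1}(M) \simeq \mathbb{C}[\chi(M)]$ (or from $S_\zeta(M)$ at a root of unity) to the generic dimension $\dim_{\mathbb{Q}(A)} S(M)$ is governed by semicontinuity and can jump, and you would need a matching multiplicity-sensitive statement on the Floer side, which does not exist. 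Either gap alone is fatal to the argument as a proof.

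What your outline does usefully capture is the circle of ideas the paper actually works within: the paper proves \emph{partial} results in this direction --- namely $\dim_{\mathbb{Q}(A)} S(E_K(r)) = \dim_{\mathbb{C}} S_\zeta(E_K(r))$ for generic slopes and roots of unity under condition $(\star)$, and a decomposition of $S_\zeta(M)$ into localized skein modules indexed by characters, with the non-central pieces contributing exactly their multiplicities. These are precisely the ``multiplicity-aware'' refinements on the skein side that you flag as needed; the Floer-theoretic counterpart remains entirely open. You should present this material as evidence for, or progress towards, the conjecture rather than as a proof of it.
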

Overall, the computation and interpretation of the dimension of $S(M)$ remain a very difficult and open problem.

\subsection{Setting}
The skein module $S_A (\Sigma , R)$ has an algebraic structure induced by the operation $\alpha \star \beta$ of stacking $\alpha$ over $\beta$. For a 3-manifold $M$ with boundary, this gives $S_A (M,R)$ the structure of an $S_A (\partial M ,R)$-module.

Let $K$ be a knot, $E_K$ be the knot complement and $E_K (r)$ the surgery on $K$ of slope $r$.\\
As a skein module of a manifold with boundary, the skein module $S(E_K,\mathbb{Q} [A^{\pm 1}])$ has the structure of an $S(\mathbb{T}^2,\mathbb{Q} [A^{\pm 1}])$-module.

The condition we require for $K$ is the following :
\begin{equation}\label{Condition}\tag{$\star$}
S(E_K ,\mathbb{Q} [A^{\pm 1}])\textrm{ is finitely generated over }S(\partial E_K ,\mathbb{Q} [A^{\pm 1}]).
\end{equation}
\begin{theorem}\cite[Theorem 2]{Le06}\label{Thm:TwoBridgeKnots1}
Any two-bridge knot satisfies (\ref{Condition}).
\end{theorem}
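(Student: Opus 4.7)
The plan is to exhibit an explicit finite set of framed links in $E_K$ whose $S(\partial E_K, \mathbb{Q}[A^{\pm 1}])$-span is all of $S(E_K, \mathbb{Q}[A^{\pm 1}])$. Fix a two-bridge knot $K$, parameterized by a fraction $p/q$. I would realize $K$ as the plat closure of a 4-strand braid word read off from a continued fraction expansion of $p/q$, and then project to obtain a planar diagram $D$ of $K$ lying inside a vertical strip whose ``elementary tangle'' boxes correspond to the partial quotients. This combinatorial description is the input that makes two-bridge knots uniquely tractable among knot classes.

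Next I would define a candidate generating set of ``standard'' links supported in a small regular neighborhood of $D \cup \partial E_K$. Concretely, after thickening $D$ into a ribbon graph sitting inside $E_K$, a standard link is a disjoint union of parallel arcs along each edge together with a collection of boundary-parallel curves; such a configuration is indexed by a tuple of non-negative integers bounded by the combinatorial data of the continued fraction. The claim is that up to the action of $S(\partial E_K , \mathbb{Q}[A^{\pm 1}])$, only finitely many indexing tuples need to be retained.

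The heart of the proof is the reduction step: I would show that every framed link in $E_K$ equals, modulo the Kauffman skein relations and the peripheral action, a $\mathbb{Q}[A^{\pm 1}]$-combination of standard forms. Starting from an arbitrary diagram, one first resolves all crossings of a generic projection using the Kauffman relation, producing a sum of crossingless diagrams; each such diagram is then isotoped near $D$, and any excess parallel strands crossing a ``crossover region'' are resolved via the local skein identities available from the elementary tangles. By induction on a complexity functional (for instance, the pair consisting of total crossing number and geometric intersection with a bridge sphere, ordered lexicographically) each skein resolution strictly reduces complexity or outputs a standard form. Bigon and trigon moves near the boundary torus push surplus material into $S(\partial E_K, \mathbb{Q}[A^{\pm 1}])$.

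The principal obstacle will be designing the complexity functional and the associated local moves so that the reduction is simultaneously terminating \emph{and} confined to finitely many standard forms modulo the peripheral action. In particular, one must verify that the elementary tangle identities propagate information correctly across adjacent boxes of the continued fraction so that no new ``type'' of standard form is introduced during reduction. A secondary subtlety is to correctly absorb loops that run parallel to the meridian or longitude into the $S(\partial E_K, \mathbb{Q}[A^{\pm 1}])$-coefficients without double counting; this is handled by choosing a normalization of standard forms in which no component is boundary-parallel, so that every such component can be uniquely pushed to the boundary.
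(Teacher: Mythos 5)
There is a genuine gap here, and it is worth noting first that the paper itself offers no proof of this statement: it is imported verbatim as \cite[Theorem 2]{Le06}, so the only meaningful comparison is with L\^e's argument. Your proposal is a strategy outline rather than a proof. The decisive step --- that the skein-theoretic reduction both terminates and confines every link to a \emph{finite} list of standard forms modulo the $S(\partial E_K,\mathbb{Q}[A^{\pm 1}])$-action --- is exactly the content of the theorem, and you explicitly defer it (``the principal obstacle will be designing the complexity functional\dots''). Nothing in the proposal explains where the finiteness bound comes from: resolving all crossings of an arbitrary diagram produces crossingless curves that need not sit near your ribbon graph, and without identifying the specific relations supplied by the two-bridge structure you have no mechanism forcing the indexing tuples of your ``standard links'' into a bounded range. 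As written, the argument would apply verbatim to any knot with a chosen diagram, and the conclusion is false in that generality, so some essential input from the two-bridge hypothesis is missing.

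For contrast, L\^e's proof proceeds algebraically: the exterior of a two-bridge knot $\mathfrak{b}(p,q)$ is a genus-two handlebody $H$ with a single $2$-handle attached, $S(H,\mathbb{Z}[A^{\pm 1}])$ is the polynomial algebra on the three core curves $x,y,z$, and $S(E_K)$ is the quotient by the relations obtained by sliding skeins over the $2$-handle. The key computation shows these relations recursively lower the degree in the non-peripheral variable, so that $S(E_K)$ is spanned by the classes $x^a y^b$ with $0\leq b\leq (p-1)/2$; since $x$ is the meridian, this exhibits the finitely many elements $y^0,\dots,y^{(p-1)/2}$ as generators over $S(\partial E_K)$. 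If you want to salvage a diagrammatic proof along your lines, the handle-sliding relations are precisely the ``local identities'' you would need to make your complexity functional decrease, and the bound $(p-1)/2$ is what your finiteness claim must reproduce.
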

We remark that condition (\ref{Condition}) still make sense when $E_K$ is replaced by a 3-manifold $M$ with $\partial M \simeq \mathbb{T}^2$ and our main result (Theorem \ref{Thm:Main}) applies to that setting as well. Since our main example comes from surgeries on 2-bridge knots, we continue in this setting.

\subsection{The results}
It is well known that the skein module is deeply connected with the character variety. We use this connection to compute the dimension of $S(M)$.

For an oriented connected manifold $M$, let 
$$
\chi (M) = Hom(\pi_1 (M),SL_2 (\mathbb{C}))/\!\!/ SL_2 (\mathbb{C} )
$$
be the $SL_2 (\mathbb{C})$-character scheme of $M$ and $X (M)$ its underlying algebraic set. The character variety $X(M)$ can be seen as the quotient of $Hom(\pi_1 (M),SL_2 (\mathbb{C}))$ in which two representations are identified if and only if their traces coincide. If $\mathbb{C} [\chi (M)]$ has no non-trivial nilpotent elements we say that the character variety is reduced. It is important to note that, when $X(M)$ is finite and reduced, Conjecture \ref{Conj:Instanton-Floer} becomes that the dimension of $S(M)$ is the number of characters of $X(M)$.

Here is how the character variety and the skein module are related :

\begin{theorem}\cite{Bul}\cite{PrzSik}\label{Thm:BPS}
$$S_{-1} (M) \simeq \mathbb{C} [\chi (M)]$$
\end{theorem}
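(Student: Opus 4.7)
The plan is to construct an explicit $\mathbb{C}$-algebra homomorphism
\[
\Phi : S_{-1}(M) \longrightarrow \mathbb{C}[\chi(M)]
\]
and then prove separately that it is surjective and injective. On a framed link $L$ with components representing conjugacy classes $\gamma_1, \dots, \gamma_n$ in $\pi_1(M)$, I would set
\[
\Phi(L) \;=\; \prod_{i=1}^n \bigl( -\mathrm{tr}_{\gamma_i} \bigr),
\]
where $\mathrm{tr}_\gamma \in \mathbb{C}[\chi(M)]$ is the regular function sending $[\rho] \mapsto \mathrm{tr}(\rho(\gamma))$. The framing is irrelevant because the writhe correction $-A^3$ equals $1$ at $A=-1$, and the orientation is irrelevant because $\mathrm{tr}(X) = \mathrm{tr}(X^{-1})$ for $X \in SL_2(\mathbb{C})$.

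Well-definedness is the first substantive step: one checks that the Kauffman skein relations, specialized at $A = -1$, are killed by $\Phi$. The trivial-loop relation $\bigcirc = -A^2 - A^{-2}$ becomes $-\mathrm{tr}(I) = -2$, which is immediate. The crossing relation, which at $A=-1$ reads $L_\times + L_0 + L_\infty = 0$, is precisely the skein-theoretic avatar of the fundamental $SL_2$-trace identity
\[
\mathrm{tr}(XY) + \mathrm{tr}(XY^{-1}) \;=\; \mathrm{tr}(X)\,\mathrm{tr}(Y), \qquad X,Y \in SL_2(\mathbb{C}),
\]
once one tracks that the two resolutions of a crossing between loops representing $\alpha$ and $\beta$ give single loops representing $\alpha\beta$ and $\alpha\beta^{-1}$. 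Surjectivity of $\Phi$ then follows immediately from the classical theorem of Procesi, according to which $\mathbb{C}[\chi(M)]$ is generated as a $\mathbb{C}$-algebra by the trace functions $\mathrm{tr}_\gamma$, $\gamma \in \pi_1(M)$, each of which is in the image of $\Phi$ up to sign.

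Injectivity is where I expect the main obstacle to lie. The strategy is to identify $S_{-1}(M)$ with the \emph{universal commutative $SL_2$-trace algebra} of $\pi_1(M)$, namely the quotient of the symmetric algebra on conjugacy classes by the ideal generated formally by the trace identity together with $\mathrm{tr}(1) = 2$. Procesi's invariant theory identifies this universal trace algebra with $\mathbb{C}[\chi(M)]$, so injectivity reduces to showing that every relation forced by the skein calculus in $S_{-1}(M)$ is already a formal consequence of the trace identity. The delicate point is to express each such skein relation as an iterated application of the trace identity, and to do so while keeping the scheme structure on $\chi(M)$ intact; one must not pass to the reduced ring, which is precisely why the original statement of Bullock had to be sharpened by Przytycki--Sikora. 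This bookkeeping step is where I expect the real technical effort of the proof to concentrate.
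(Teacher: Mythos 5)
This statement is not proved in the paper at all: it is quoted from \cite{Bul} and \cite{PrzSik}, and the paper only records the isomorphism itself, which sends a link $L$ with components $K_1,\dots,K_n$ to $\rho\mapsto\prod_i(-\mathrm{tr}(\rho(K_i)))$. Your map $\Phi$ is exactly that map, and your well-definedness check (framing change $-A^3=1$ at $A=-1$, $\mathrm{tr}(X)=\mathrm{tr}(X^{-1})$ for orientation, the trace identity $\mathrm{tr}(XY)+\mathrm{tr}(XY^{-1})=\mathrm{tr}(X)\mathrm{tr}(Y)$ matching $L_\times+L_0+L_\infty=0$) and the surjectivity via generation of the invariant ring by trace functions are correct and are indeed how the cited proofs begin.

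The injectivity step, however, is a genuine gap rather than a proof. Two points in particular. First, to identify $S_{-1}(M)$ with the universal trace algebra you need both inclusions of ideals: not only that skein relations are consequences of the trace identity (which you use for well-definedness), but also that the map from conjugacy classes to knots is well defined on $S_{-1}(M)$ --- this requires observing that at $A=-1$ a crossing change is a skein identity, so a link class depends only on the free homotopy classes of its components; you do not mention this, and without it the inverse map is not even defined. Second, and more seriously, you dispose of the identification of the universal trace algebra with $\mathbb{C}[\chi(M)]$ by invoking ``Procesi's invariant theory.'' At the level of reduced rings this is classical, but the statement being proved here is about the character \emph{scheme}, nilpotents included; the assertion that the formally defined trace algebra coincides with the full (possibly non-reduced) invariant ring of the representation scheme is precisely the content that separates Przytycki--Sikora's theorem from Bullock's ``up to nilpotents'' version. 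Citing Procesi for it assumes the hardest part of the theorem, which you yourself flag as the locus of ``the real technical effort'' --- so the proposal is an accurate roadmap of the standard argument, but the decisive step is deferred rather than carried out.
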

This result was initially established in \cite{Bul} up to nilpotents, and later fully proven in \cite{PrzSik}. The isomorphism of Theorem \ref{Thm:BPS} associates to a link $L$ with components $K_1,\ldots ,K_n$ the element $\left( \rho \rightarrow \prod\limits_{i=1}^{n} (-tr (K_i ))\right) \in \mathbb{C} [\chi (M)]$.

This connection was exploited in \cite{DKS25} under a property called tameness by its authors :\\
We say that $S (M, \mathbb{Q} [A^{\pm 1}])$ is tame if it can be expressed as a direct sum of cyclic $\mathbb{Q} [A^{\pm 1}]$-modules and does not contain $\mathbb{Q} [A^{\pm 1}]/(\phi_{2N} )$ as a submodule for at least one odd N, where $\phi_{2N}$ is the $2N$-th cyclotomic polynomial.

The main result of \cite{DKS25} is the following :
\begin{theorem}\cite[Theorem 1.1]{DKS25}\label{Thm:DetKalSik} 
Let $M$ be a closed 3-manifold such that $S(M,\mathbb{Q} [A^{\pm 1}] )$ is tame and $X(M)$ is finite.\\
Then, for almost all primitive $2N$-roots of unity $\zeta$, $$\dim_{\mathbb{Q} (A)} S(M) = \dim_{\mathbb{C}} S_\zeta (M) = \vert X(M) \vert$$
where $\vert X(M) \vert$ is the number of points in $X(M)$ counted with multiplicity.
\end{theorem}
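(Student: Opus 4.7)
The plan is to combine the structure theorem for $\mathbb{Q}[A^{\pm 1}]$-modules under tameness with the specialization $S_{-1}(M) \simeq \mathbb{C}[\chi(M)]$ of Theorem \ref{Thm:BPS}, and to bridge the specializations at $-1$ and at a primitive $2N$-th root of unity $\zeta$ via a Chebyshev/Frobenius comparison. Tameness provides an odd $N$ together with a decomposition
\[
S(M, \mathbb{Q}[A^{\pm 1}]) \simeq \mathbb{Q}[A^{\pm 1}]^{k} \oplus \bigoplus_{i \in I} \mathbb{Q}[A^{\pm 1}]/(p_i(A)),
\]
in which no $p_i$ is divisible by $\phi_{2N}$; otherwise $\mathbb{Q}[A^{\pm 1}]/(p_i)$ would contain $\mathbb{Q}[A^{\pm 1}]/(\phi_{2N})$ as a submodule. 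Tensoring with $\mathbb{Q}(A)$ kills all torsion and yields $\dim_{\mathbb{Q}(A)} S(M) = k$, which is finite by Theorem \ref{Thm:GJS}.

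For the specialization at $\zeta$, I use the base-change identity $S_\zeta(M) \simeq S(M, \mathbb{Q}[A^{\pm 1}]) \otimes_{\mathbb{Q}[A^{\pm 1}]} \mathbb{C}_\zeta$. Each free summand contributes $1$ and each torsion summand $\mathbb{Q}[A^{\pm 1}]/(p_i)$ contributes $1$ if $p_i(\zeta) = 0$ and $0$ otherwise. If $\zeta$ is a primitive $2N$-th root of unity, then $p_i(\zeta) = 0$ would force the minimal polynomial $\phi_{2N}$ of $\zeta$ to divide $p_i$, contradicting tameness. Therefore $\dim_{\mathbb{C}} S_\zeta(M) = k = \dim_{\mathbb{Q}(A)} S(M)$ for \emph{every} primitive $2N$-th root of unity $\zeta$, which establishes the first equality of the theorem.

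It remains to identify this common value with $|X(M)|$. Theorem \ref{Thm:BPS} gives $\dim_{\mathbb{C}} S_{-1}(M) = |X(M)|$, and upper semicontinuity of fiber dimensions only yields the one-sided bound $k \leq |X(M)|$. To obtain the reverse inequality I plan to invoke the Chebyshev/Frobenius homomorphism $\Phi_\zeta : S_{-1}(M) \to S_\zeta(M)$, available at primitive $2N$-th roots of unity with $N$ odd. This homomorphism gives $S_\zeta(M)$ the structure of a finitely generated $\mathbb{C}[\chi(M)]$-module; since $X(M)$ is finite, $\mathbb{C}[\chi(M)]$ is Artinian, and $S_\zeta(M)$ decomposes as $\bigoplus_{p \in X(M)} S_\zeta(M)_p$ into localized skein modules. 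The key technical point is that the local dimension at each non-central $p$ equals the scheme-theoretic multiplicity of $p$ in $\chi(M)$; summing over $p$ then gives $\dim_{\mathbb{C}} S_\zeta(M) \geq |X(M)|$, and combined with the previous paragraph this forces $k = |X(M)|$.

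The main obstacle is this last step. The earlier paragraphs are formal consequences of the PID structure theorem and tameness, but the Chebyshev/Frobenius comparison, the local-to-global decomposition over $\chi(M)$, and the identification of local dimensions with scheme-theoretic multiplicities are genuinely new input. The ``almost all'' in the conclusion absorbs the finitely many primitive $2N$-th roots of unity where the Frobenius comparison degenerates or where central characters spoil the local-dimension count.
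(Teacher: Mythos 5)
Your proposal follows essentially the same route as the cited source \cite{DKS25} of this theorem, as reorganized in Section \ref{Subsec:Comp} of the paper: tameness plus the structure theorem over the PID $\mathbb{Q}[A^{\pm 1}]$ gives $\dim_{\mathbb{Q}(A)} S(M) = \dim_{\mathbb{C}} S_\zeta(M) = k$ (your observation that $\phi_{2N}\nmid p_i$ for all $i$ is the correct reading of tameness); specialization at $A=-1$ together with Theorem \ref{Thm:BPS} gives $k \le \vert X(M)\vert$; and the reverse inequality is extracted from the threading map of Theorem \ref{Thm:BohWon} and the decomposition of $S_\zeta(M)$ into localized skein modules. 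By taking the lower bound from that decomposition rather than from the count $\eta$ of points without multiplicity, you are in effect using the improved, non-reduced version described in Remark \ref{Rk:NotReduced}, which is fine.

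The one genuine soft spot is your treatment of central characters in the lower bound. The decomposition $S_\zeta(M) = \bigoplus_{[\rho]} S_\zeta(M)_{[\rho]}$ yields $\dim_{\mathbb{C}} S_\zeta(M)_{[\rho]} = n_{[\rho]}$ only for \emph{non-central} $[\rho]$ (Theorem \ref{Thm:ReprNonCentrale}); summing these does not reach $\vert X(M)\vert$ unless you also know $S_\zeta(M)_{[\rho]} \neq 0$ for every central character. This cannot be ``absorbed by almost all $\zeta$'' as your last paragraph suggests --- the potential vanishing is present for every fixed $\zeta$ and has nothing to do with discarding finitely many roots of unity. Indeed the paper explicitly leaves $S_\zeta(M)_{[\mathbbm{1}]}$ undetermined (it is only \emph{conjectured} to be $\mathbb{C}$), and the missing input is precisely the inequality $\dim_{\mathbb{C}} S_\zeta(M) \ge \vert X(M)\vert$ of \cite[Theorem 2.1]{DKS25}, which is established by a separate nonvanishing argument. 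With that citation replacing your final heuristic, the argument closes and coincides with the paper's account.
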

\begin{remark}\label{Rk:NotReduced}
Originally, the hypothesis of \cite{DKS25} on $M$ includes the fact that $X(M)$ has to be reduced. However, one can use the work of \cite{TehFroKan} to remove this condition. We make this more precise in Section \ref{Subsec:Comp}.
\end{remark}
However, the tameness condition is not easy to check. The 3-manifolds that we know tame usually satisfy the stronger condition of having $S(M,\mathbb{Z}[A^{\pm 1}])$ finitely generated over $\mathbb{Z} [A^{\pm 1}]$ and are essentially lens spaces \cite[Theorem 4]{HosPrz93}, Dehn fillings on the figure-eight knot \cite[Theorem 4.3]{DKS25}, Dehn fillings on $(2,2n+1)$-torus knots \cite[Theorem 4.3]{DKS25} and small Seifert manifolds \cite[Theorem. 1.2]{DKS24}. Nevertheless, it is conjecture in \cite[Conjecture 1.1]{DKS24} that every small 3-manifolds is tame.

In this paper, we prove the left part of Theorem \ref{Thm:DetKalSik} for Dehn fillings satisfying (\ref{Condition}) without the tameness condition (Proof in Section \ref{SubSec:Proof}) :
\begin{theorem}\label{Thm:Main}
Under condition (\ref{Condition}), for almost all slopes $r$ and almost all primitive $2N$-roots of unity,
$$
dim_{\mathbb{Q} (A)} S(E_K (r)) = \dim_{\mathbb{C}} S_\zeta (E_K (r))
$$
\end{theorem}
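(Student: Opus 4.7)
My plan is to reduce the comparison to a Fitting-ideal computation over $T:=S(\partial E_K,\mathbb{Q}[A^{\pm 1}])$. The starting point is the Dehn-filling gluing formula for skein modules: if $V$ is the filling solid torus whose meridian is identified with the slope-$r$ curve on $\partial E_K$, then this meridian bounds a disc in $V$ and so equals $-(A^2+A^{-2})\cdot\emptyset$ there; hence
$$
S(E_K(r),R) \;\cong\; S(E_K,R)\,\big/\,\bigl(\gamma_r + A^2 + A^{-2}\bigr)\,S(E_K,R)
$$
for every coefficient ring $R$ in which $A$ is invertible, where $\gamma_r\in S(\partial E_K,R)$ denotes the simple closed curve of slope $r$. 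This identifies both $S(E_K(r))$ and $S_\zeta(E_K(r))$ as quotients of base changes of a single module $S(E_K,\mathbb{Q}[A^{\pm 1}])$ by the action of one element of $T$.

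Condition (\ref{Condition}) provides a finite $T$-module presentation $T^m \xrightarrow{\Phi} T^n \to S(E_K,\mathbb{Q}[A^{\pm 1}])\to 0$. Appending the block $(\gamma_r + A^2 + A^{-2})\cdot\mathbbm{1}_n$ to $\Phi$ gives a matrix $\Psi_r$ whose cokernel, after tensoring with $\mathbb{Q}(A)$ or specializing at $\zeta$, presents the two spaces to be compared. Since Fitting invariants commute with base change, and Theorem \ref{Thm:GJS} forces $\dim_{\mathbb{Q}(A)} S(E_K(r))$ to be finite, after passing to a suitable central localization of $T$ the dimension on each side becomes a rank computation for $\Psi_r$ controlled by finitely many Fitting minors.

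The third step establishes genericity in the slope $r$. The element $\gamma_r$ depends algebraically on the coprime integers $(p,q)$ parameterizing $r$ (concretely, $\gamma_{p/q}$ is a Chebyshev-type combination of the torus-skein generators with coefficients depending polynomially on $(p,q)$), so by upper-semicontinuity of Fitting strata, $\dim_{\mathbb{Q}(A)} S(E_K(r))$ is constant on a cofinite set of slopes. For the specialization at $\zeta$, the bad $A$-values are contained in the common zero locus of the leading coefficients of the finitely many Laurent polynomials in $A$ generating the relevant Fitting ideals of $\Psi_r$; this forbids only finitely many roots of unity. Combining: first fix a generic slope, then a primitive $2N$-th root of unity with $N$ odd outside the corresponding finite bad set.

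The main obstacle is making the two ``almost all'' conditions compatible. A priori the finite bad set of $\zeta$ depends on $r$, so a parametric version of Fitting upper-semicontinuity, jointly in $(r,A)$, is needed to produce a slope-independent bad $\zeta$-set on the generic $r$-stratum. A secondary delicate point is the non-commutativity of $T$: the Fitting-ideal formalism must be applied after localizing to a central subalgebra that controls the support of $S(E_K)$; the classical-limit interpretation of this central subalgebra in terms of $X(\partial E_K)$ (via Theorem \ref{Thm:BPS}) is what lets one track the intersection of the $A$-polynomial curve of $K$ with the slope-$r$ line and confirm that the generic intersection count is preserved under both quantization and root-of-unity specialization.
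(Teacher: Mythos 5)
Your overall strategy --- realize $S(E_K(r))$ as a base change of a single integral object and compare its generic fibre with its fibre at $A=\zeta$ --- is the right one, and it is also the paper's. But the step where you propose to run the Fitting-ideal formalism over $T=S(\partial E_K,\mathbb{Q}[A^{\pm 1}])$ ``after passing to a suitable central localization of $T$'' is where the argument breaks. The algebra $T\simeq S(\mathbb{T}^2,\mathbb{Q}[A^{\pm 1}])$ is non-commutative and, away from roots of unity, its centre is just $\mathbb{Q}[A^{\pm 1}]\cdot\emptyset$; there is no central subalgebra over which $S(E_K)$ becomes finitely generated (already for the unknot, $S(E_K)\simeq\mathbb{Q}[A^{\pm 1}][x]$ is not finitely generated over $\mathbb{Q}[A^{\pm 1}]$). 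So the Fitting ideals of your $\Psi_r$ are not defined where you need them, and Theorem \ref{Thm:GJS} only gives finiteness of $\dim_{\mathbb{Q}(A)}S(E_K(r))$ --- it does not produce the finitely generated $\mathbb{Q}[A^{\pm 1}]$-lattice that is indispensable for comparing the generic dimension with the specialization at $\zeta$. Manufacturing that lattice is the entire content of the proof.

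The paper does this with two inputs for which your proposal has no substitute: (i) the peripheral annihilator result of \cite[Corollary 1.7]{BelDet}, which gives, for each generator $f$ of $S(E_K)$ over $T$, a nonzero element of $S(\partial E_K)$ killing $f$, encoded as a centrally symmetric lattice polygon $\mathcal{P}^f$ in the Frohman--Gelca basis; and (ii) the product-to-sum formula, which lets one translate that relation by any $(\mu,\nu)_T$ and, combined with the surgery relation $(p,q)_T\cdot f=(-A^2-A^{-2})\cdot f$, run a descending induction confining $S(\partial E_K)\cdot f$ to finitely many basis elements --- at the cost of inverting the product $U$ of the extremal coefficients of the polygons. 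This yields $S(E_K(r),R_U)$ finitely generated over the PID $R_U=\mathbb{Q}[A^{\pm 1}][U^{-1}]$, and the structure theorem (free part plus torsion $\bigoplus_i R_U/q_i^{s_i}$) then gives the equality of dimensions for every primitive $2N$-root of unity $\zeta$ avoiding the roots of $U$ and of the $q_i$. Note finally that your ``main obstacle'' is not one: the theorem's quantifiers are nested (for almost all $r$, then for almost all $\zeta$, the bad set of $\zeta$ being allowed to depend on $r$), so no slope-independent bad set of roots of unity is required; the genericity condition on $r$ is simply that $r$ is not a slope of an edge of any of the finitely many polygons $\mathcal{P}^f$.
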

Because of \cite[Theorem 2.1]{DKS25} : $\dim_{\mathbb{C}} (S_\zeta  (E_K (r) ))\geq \vert X(E_K (r)) \vert$ holds for infinitely many $\zeta$, then we have :
\begin{corollary}\label{Cor:FiniteX}
Under condition (\ref{Condition}), for almost all slopes $r$, $X(E_K (r))$ is finite.
\end{corollary}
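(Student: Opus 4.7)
The plan is to assemble the corollary from three ingredients already on the table: the Gunningham--Jordan--Safronov finite-dimensionality theorem (Theorem \ref{Thm:GJS}), Theorem \ref{Thm:Main}, and the generic lower bound $\dim_{\mathbb{C}} S_\zeta(E_K(r)) \geq |X(E_K(r))|$ from \cite[Theorem 2.1]{DKS25}. Since any Dehn filling $E_K(r)$ is a closed 3-manifold, Theorem \ref{Thm:GJS} guarantees that $d(r) := \dim_{\mathbb{Q}(A)} S(E_K(r))$ is finite, which is the crucial finiteness input.

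Next, I would fix any slope $r$ in the generic set provided by Theorem \ref{Thm:Main}. That theorem then delivers a cofinite set $\mathcal{Z}_1$ of primitive $2N$-th roots of unity (with $N$ odd) on which $\dim_{\mathbb{C}} S_\zeta(E_K(r)) = d(r)$, while \cite[Theorem 2.1]{DKS25} furnishes an infinite set $\mathcal{Z}_2$ of roots of unity satisfying $\dim_{\mathbb{C}} S_\zeta(E_K(r)) \geq |X(E_K(r))|$. Since $\mathcal{Z}_1$ is the complement of a finite set in an infinite one, $\mathcal{Z}_1 \cap \mathcal{Z}_2$ is still infinite; picking any $\zeta$ in this intersection yields
$$|X(E_K(r))| \leq \dim_{\mathbb{C}} S_\zeta(E_K(r)) = d(r) < \infty.$$

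A positive-dimensional complex algebraic set contributes infinitely many characters, so this finite bound forces $X(E_K(r))$ to be zero-dimensional and hence finite. The only mildly delicate point is checking that the ``almost all $\zeta$'' from Theorem \ref{Thm:Main} and the ``infinitely many $\zeta$'' from \cite[Theorem 2.1]{DKS25} can indeed be realised simultaneously, but this is automatic from the quantifiers: a cofinite subset intersected with an infinite subset is still infinite. No deeper obstacle arises; the corollary is essentially a free consequence of Theorem \ref{Thm:Main}, with all the real work having been done in proving that theorem.
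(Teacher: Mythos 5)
Your proof is correct and follows essentially the same route as the paper, which deduces the corollary directly from Theorem \ref{Thm:Main} combined with the lower bound $\dim_{\mathbb{C}} S_\zeta(E_K(r)) \geq |X(E_K(r))|$ of \cite[Theorem 2.1]{DKS25} holding for infinitely many $\zeta$. The only cosmetic difference is that you cite Theorem \ref{Thm:GJS} for the finiteness of $\dim_{\mathbb{Q}(A)} S(E_K(r))$, whereas this already falls out of Proposition \ref{Prop:Tame} (finite generation over $R_U$); both are valid, and your explicit check that the two sets of admissible $\zeta$ intersect is a welcome precision the paper leaves implicit.
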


\subsection{Character variety, reduced skein module and localized skein module}

The structure of $S_\zeta (M)$ is also deeply connected to the character variety through the threading map of \cite{BohWon} that we recall below. First, we need to redefine Chebychev polynomials of the first kind:

\begin{equation}\label{Chebychev1}\tag{T}
\left\lbrace \begin{array}{lrcl}
T_0 = 2 , \ T_1 = X  \\ 
\forall n\geq 2 , \ T_n = XT_{n-1} - T_{n-2}\end{array}\right.
\end{equation}

Let $\zeta$ be a primitive $2N$-root of unity with $N$ odd. It was shown in \cite{Le15} that for a link $L$, the element $T_N (L) \sqcup L'$ of $S_\zeta (M)$ only depends on $L'$ and on the homotopy class of $T_N (L)$. Let $\tau : S_{-1} (M) \rightarrow S_{\zeta } (M)$ be the linear map defined by $\tau (L) = T_N (L)$.\\
Then,
\begin{theorem}\cite{BohWon}\cite{Le15}\label{Thm:BohWon}
For $\zeta$ a primitive $2N$-root of unity with $N$ odd, $\tau$ gives to $S_\zeta (M)$ a structure of $S_{-1} (M)$-module.
\end{theorem}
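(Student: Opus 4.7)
The plan is to define the action of $[L]\in S_{-1}(M)$ on $[\beta]\in S_\zeta(M)$ by
$$[L]\cdot[\beta] := [T_N(L)\sqcup\beta]\in S_\zeta(M),$$
where $T_N(L)$ is obtained by threading each component of $L$ with the Chebyshev polynomial $T_N$ inside a tubular neighborhood, pushed off $\beta$ by a small isotopy. Extending bilinearly, the theorem reduces to well-definedness and the verification of the module axioms.

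Well-definedness in $\beta$ is immediate: the Kauffman skein relations defining $S_\zeta(M)$ on $\beta$ take place in small balls that may be isotoped disjoint from $T_N(L)$, and hence lift to equalities in $S_\zeta(M)$ after disjoint union with $T_N(L)$. Well-definedness in $L$ requires preservation of the two Kauffman relations at $A=-1$. For the trivial loop relation $[\bigcirc]=-2[\emptyset]$ in $S_{-1}(M)$: a small unknot in $S_\zeta(M)$ evaluates to $-\zeta^2-\zeta^{-2}$; writing $\zeta=e^{i\pi k/N}$ with $\gcd(k,2N)=1$, this equals $2\cos(\pi+2\pi k/N)$, and the identity $T_N(2\cos\theta)=2\cos(N\theta)$ (immediate from (\ref{Chebychev1})) gives
$$T_N(-\zeta^2-\zeta^{-2}) = 2\cos(N\pi+2\pi k) = 2(-1)^N = -2,$$
matching since $N$ is odd. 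The crossing relation requires showing $T_N(L_+)\sqcup\beta + T_N(L_0)\sqcup\beta + T_N(L_\infty)\sqcup\beta = 0$ in $S_\zeta(M)$ for any Kauffman triple $L_+, L_0, L_\infty$.

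The crossing verification is the main obstacle, and is the substantive content of \cite{BohWon}\cite{Le15}. It follows from the transparency property recalled just before the theorem --- namely, that $T_N(L)\sqcup\beta$ depends only on $\beta$ and on the free homotopy class of $L$ --- applied as follows: one expands the $N\times N$ array of crossings appearing on the cable $T_N(L_+)$ inside the ball $B$ where $L_+, L_0, L_\infty$ differ, via the skein relation at $A=\zeta$. The so-called miraculous cancellation at primitive $2N$-th roots of unity with $N$ odd, combined with the Chebyshev recursion (\ref{Chebychev1}) and the Jones--Wenzl idempotent identities, collapses the resulting expansion to exactly $-T_N(L_0)\sqcup\beta - T_N(L_\infty)\sqcup\beta$. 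Once this step is in hand, the remaining module axioms are routine: associativity reduces to $T_N(L_1\sqcup L_2)=T_N(L_1)\sqcup T_N(L_2)$ together with associativity of disjoint union up to isotopy, and the empty link acts as the identity.
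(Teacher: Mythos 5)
The paper offers no proof of this statement: it is imported verbatim from \cite{BohWon} and \cite{Le15}, so there is no internal argument to compare yours against. Judged on its own terms, your outline correctly identifies the shape of the argument --- define $[L]\cdot[\beta]=T_N(L)\sqcup\beta$, check well-definedness in $\beta$ (local skein relations can be isotoped off $T_N(L)$), check the unknot relation (your computation $T_N(-\zeta^2-\zeta^{-2})=-2$ for $N$ odd is correct and is exactly why the source of the module structure is $S_{-1}(M)$ rather than $S_{\zeta^{N^2}}(M)$ for general $\zeta$), and reduce the rest to the crossing relation. But the crossing relation $T_N(L_+)+T_N(L_0)+T_N(L_\infty)=0$ in $S_\zeta(M)$ is the entire content of the theorem, and you dispose of it by citing the same two references the theorem itself cites. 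As a proof this is therefore circular: you have verified the easy axioms and outsourced the one step that requires work.

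There is also a logical slip worth flagging: you say the crossing verification ``follows from the transparency property recalled just before the theorem.'' It does not. Transparency controls crossing changes between $T_N(L)$ and \emph{other} strands (hence invariance of $T_N(L)\sqcup\beta$ under homotopy of $L$ and under the choice of push-off from $\beta$), whereas the Kauffman relation at $A=-1$ involves the two \emph{resolutions} $L_0$, $L_\infty$ of a self-crossing of $L$, which are not homotopic to $L_+$ and may have a different number of components. The two statements are parallel consequences of the same cabling computation (the expansion of the $N\times N$ crossing array and the ``miraculous cancellation'' at primitive $2N$-th roots with $N$ odd), not one a consequence of the other. A self-contained proof would have to carry out that computation --- e.g.\ via the product-to-sum formula of Theorem \ref{Thm:FormulaT} in the local annulus/torus picture, or via the Jones--Wenzl calculus as in \cite{Le15} --- and that is precisely what is missing here.
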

In the end, $S_\zeta (M)$ has a structure of $\mathbb{C} [\chi (M)]$-module.\\
As an affine variety, the maximal ideals of $\mathbb{C} [\chi (M)]$ correspond to the points of $\chi (M)$, we denote $MaxSpec(S_{-1} (M)) = \{ \mathfrak{m}_{[\rho]} ,[\rho] \in \chi (M) \}$.\\
Following \cite{TehFroKan}, we define the reduced skein module at a character $[\rho ]\in\chi  (M)$ to be :
$$S_{\zeta,[\rho ]} (M) :=S_\zeta (M)  \underset{S_{-1} (M)}\bigotimes \faktor{S_{-1} (M)}{\mathfrak{m}_{[\rho ]} } $$
And the localized skein module at $[\rho ]$ is :
$$ S_{\zeta} (M)_{[\rho ]} =  S_\zeta (M) \underset{S_{-1} (M)}\bigotimes (S_{-1} (M) \setminus \mathfrak{m}_{[\rho ]})^{-1}  S_{-1} (M)$$

When $[\rho ]$ is an isolated and reduced point of $S_{-1} (M)$, we have that $S_{-1,[\rho]} (M) \simeq S_{-1} (M)_{[\rho]}$ and the localized skein module at $[\rho ]$ has the same dimension over $\mathbb{C}$ as the reduced skein module at $[\rho ]$. 

Let $L,L'$ be links in $M$ and $K_1 ,\ldots K_n$ be the components of $L$. Because of the structures given by Theorem \ref{Thm:BPS} and Theorem \ref{Thm:BohWon}, the following relation holds in $S_{\zeta,[\rho ]} (M)$ : $ T_N (L) \sqcup L' = (\prod\limits_{i=1}^{n} -tr(\rho (K_i) ) ) L$. In fact $S_{\zeta,[\rho ]} (M)$ is the quotient of $S_\zeta (M)$ by all relations of this type.

When $\chi (M)$ is finite, $S_{-1} (M)$ is Artinian, we then have the following decomposition :
$$ S_\zeta (M) = \underset{[\rho]\in\chi (M)}{\bigoplus} S_{\zeta} (M)_{[\rho ]}$$

In Section \ref{Sec:Main}, we show Theorem \ref{Thm:Main}. To do so, we adapt a proof of \cite{Det_Ek} to find, under condition (\ref{Condition}), a finitely generated localisation of $S(E_K (r) ,\mathbb{Q} [A^{\pm 1}] )$. After which, we follow a line of reasoning presented in \cite{DKS25} to show that the free part of this localisation has the same rank as $S(E_K (r))$ and $S_\zeta (E_K(r))$ for almost all roots of unity $\zeta$ of order $ord(\zeta )\equiv 2 \pmod{4}$.

In Section \ref{Sec:CharVar} we discuss the right part of Theorem \ref{Thm:DetKalSik} in the case where $X(M)$ finite, not necessarily reduced, and without the tameness condition.
\subsection{Acknowledgement}
I would like to thank my PhD supervisor, Renaud Detcherry, for his substantial help during the elaboration of this paper. I would also like to thank Victor Chachay for helping me understand the algebraic geometry background required for this work.
\section{Theorem \ref{Thm:Main}}\label{Sec:Main}
\subsection{A finitely generated localisation of \texorpdfstring{$S(E_K (r) ,\mathbb{Q} [A^{\pm 1}] )$}{}}\label{SubSec:fgloc}
For a polynomial $U \in \mathbb{Q} [A^{\pm 1}]$, denote $R_U := \mathbb{Q} [A^{\pm 1}] [U^{-1}]$.\\
The main result of this section will be Proposition \ref{Prop:Tame}. However, it needs some technicalities to be stated in its precise form. Still, we give a paraphrase here :
\begin{corollary}\label{Cor:Tame}
For $K$ verifying condition (\ref{Condition}), there exists a polynomial $U$ such that for almost all slopes $r$, $S(E_K (r) , R_U )$ is finitely generated over $R_U$.
\end{corollary}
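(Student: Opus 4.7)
The plan is to extract from hypothesis $(\star)$ a monic polynomial identity for the action of $x_r$ on $S(E_K(r), R_U)$, for a single polynomial $U \in \mathbb{Q}[A^{\pm 1}]$ independent of the slope $r$, thereby forcing $S(E_K(r), R_U)$ to be integral, hence module-finite, over $R_U$. The starting point will be the gluing formula $S(E_K(r), \mathbb{Q}[A^{\pm 1}]) = S(E_K, \mathbb{Q}[A^{\pm 1}]) \otimes_T S(V_r, \mathbb{Q}[A^{\pm 1}])$, where $T := S(\partial E_K, \mathbb{Q}[A^{\pm 1}])$ and $V_r$ is the solid torus glued along slope $r$. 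Identifying $S(V_r, \mathbb{Q}[A^{\pm 1}]) \cong \mathbb{Q}[A^{\pm 1}][x_r]$ with $x_r$ the class of the core of $V_r$, the induced ring map $T \to \mathbb{Q}[A^{\pm 1}][x_r]$ sends the slope-$r$ meridian $\gamma_r$ to the scalar $-A^2 - A^{-2}$ and a $V_r$-longitude $l_r$ to $x_r$.

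Using $(\star)$, I would fix a finite generating set $e_1, \ldots, e_n$ of $S(E_K, \mathbb{Q}[A^{\pm 1}])$ as a $T$-module together with an auxiliary element $\alpha \in T$, a natural choice being the canonical longitude $\lambda$ of $K$. Expanding $\alpha \cdot e_i = \sum_j M_{ij}\, e_j$ with $M_{ij} \in T$ and applying the determinant trick produces a monic polynomial $P_\alpha(X) = X^n + \sum_{k < n} c_k X^k \in T[X]$ whose value at $\alpha$ annihilates $S(E_K, \mathbb{Q}[A^{\pm 1}])$. Applying the map $T \to \mathbb{Q}[A^{\pm 1}][x_r]$ then yields, on $S(E_K(r), \mathbb{Q}[A^{\pm 1}])$, the identity
\begin{equation*}
\bar{\alpha}(x_r)^n + \sum_{k=0}^{n-1} \bar{c}_k(x_r)\,\bar{\alpha}(x_r)^k \;=\; 0,
\end{equation*}
a polynomial relation in $x_r$ with coefficients in $\mathbb{Q}[A^{\pm 1}]$. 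Taking $\alpha = \lambda$ and $r = p/q$, the standard cabling formula for torus knots in $S(V_r)$ gives $\bar{\lambda}(x_r)$ as a degree-$|p|$ polynomial in $x_r$ whose leading coefficient is a monomial in $A$ (up to sign), hence a unit of $\mathbb{Q}[A^{\pm 1}]$.

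The main technical point, and what I expect to be the principal obstacle, will be to control the leading $x_r$-coefficient of the displayed identity uniformly in $r$. Expanding each $c_k \in T$ in the canonical basis of $T$ indexed by simple closed multicurves on $\mathbb{T}^2$, the cabling formula shows that the $x_r$-degree of each $\bar{c}_k(x_r)$ grows at most linearly in $|p|$, with linear coefficient bounded by the fixed support of $c_k$. For slopes with $|p|$ large enough that $\bar{\alpha}^n$ strictly dominates every $\bar{c}_k \bar{\alpha}^k$, the leading coefficient reduces to the $n$-th power of the leading coefficient of $\bar{\lambda}$, and so is a unit of $\mathbb{Q}[A^{\pm 1}]$; the only possible obstructions on the remaining slopes are $A$-polynomial cancellations determined by the finite data $(c_k, M)$, which depend on $(e_i)$ and $\alpha$ but not on $r$. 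Taking $U$ to be the product of these finitely many $A$-polynomial factors should give, for almost all $r$, a monic polynomial relation for $x_r$ on $S(E_K(r), R_U)$, whence the desired finite generation. Making this uniformity bookkeeping precise -- in particular, effectively bounding the exceptional slope set -- is the heart of the argument, and I would carry it out by adapting the corresponding combinatorics of \cite{Det_Ek}.
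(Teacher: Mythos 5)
There is a genuine gap in your uniformity step, and it is not just bookkeeping. Fix $\alpha=(0,1)_T$ (the longitude) and write the filling slope as $q/p$, so that $(p,q)_T$ becomes the meridian of the glued solid torus $V_r$. Under the specialization $T\to\mathbb{Q}[A^{\pm1}][x_r]$, a basis element $(x,y)_T$ is sent to a polynomial of $x_r$-degree $|xq-yp|$ (the winding number of the corresponding curve in $V_r$), with unit leading coefficient. Hence $\deg_{x_r}\bar\alpha^{\,n}=n|p|$, while a term of $\bar c_k\bar\alpha^{\,k}$ coming from $(x,y)$ in the support of $c_k$ has degree up to $|xq-yp|+k|p|$. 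For $x\neq0$, the dominance condition $|xq-yp|+k|p|<n|p|$ forces $|q|/|p|<(n-k+|y|)/|x|$, i.e.\ an upper bound on the slope. So the slopes where your leading term fails to dominate form an \emph{infinite} set (all sufficiently steep slopes), not a finite one; and on that set the top $x_r$-coefficient of $\overline{P(\alpha)}$ is determined by which $(x,y)$ in the supports maximize $|xq-yp|$, which changes with $r$. The asserted finite list of $A$-polynomial obstructions ``independent of $r$'' therefore does not exist, and no single $U$ built from it can work. Symmetrizing with $\alpha=(1,0)_T$ covers steep slopes but can leave an infinite intermediate range uncovered.

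The paper sidesteps this by not asking for a monic (Cayley--Hamilton) relation at all. For each generator $f$ it takes an arbitrary nonzero annihilator in $S(\partial E_K,\mathbb{Q}[A^{\pm1}])$ (Lemma \ref{Lemma:Polygone}, via \cite[Corollary 1.7]{BelDet}), records its support polygon $\mathcal{P}^f$ in the Frohman--Gelca basis, and inverts only the boundary coefficients of $\mathcal{P}^f$ to form $U$. For any slope that is not an edge-slope of some $\mathcal{P}^f$ --- finitely many exclusions in total --- a linear functional vanishing on $(p,q)$ attains its maximum on $\mathcal{P}^f$ at a \emph{unique} vertex, whose coefficient is a unit of $R_U$; translating the relation by the product-to-sum formula then solves for the extremal term, and the surgery relation handles the direction along $(p,q)$ (this last part is the one piece your tensor-product formulation over $S(V_r)\cong\mathbb{Q}[A^{\pm1}][x_r]$ packages correctly and cleanly). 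The essential difference is that the ``leading term'' must be chosen per slope, via a functional adapted to $r$ acting on a fixed polygon, rather than being the fixed top term of one monic polynomial; that is exactly what makes the exceptional set finite. Repairing your argument along these lines essentially reproduces the proof of Proposition \ref{Prop:Tame}.
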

The main tool here is the Frohman-Gelca basis of $S(\mathbb{T}^2 ,\mathbb{Q} [A^{ \pm 1}])$ used on $S(E_K,\mathbb{Q} [A^{\pm 1}])$ through its $S(\partial E_K ,\mathbb{Q} [A^{\pm 1}] ) \simeq S(\mathbb{T}^2 , \mathbb{Q} [A^{\pm 1}])$-module structure. We describe the Frohman-Gelca basis below.

Fixing two oriented curves $\lambda$ and $\mu$ intersecting once on $\mathbb{T}^2$, let $x,y$ be coprime integers, we define $\gamma_{(x,y)}$ to be the skein element represented by an oriented curve of homology class $x\lambda  + y\mu $ on $\mathbb{T}^2 \times I$. In our context, we choose $\lambda$ to be a meridian of $K$ and $\mu$ a longitude. The multicurves $\gamma_{(x,y)}^n$, consisting of $n$ parallel copies of $\gamma_{(x,y)}$, together with the empty curve, form a basis of $S(\mathbb{T}^2, \mathbb{Q} [A^{\pm 1}])$.

Recall that the definition of Chebychev polynomials of the first kind $\{ T_n \}$ is given at (\ref{Chebychev1}).\\
Frohman and Gelca introduced the following basis of $S(\mathbb{T}^2  , \mathbb{Q} [A^{\pm 1}])$, for which the product (stacking operation) satisfies the so-called product-to-sum formula :

\begin{theorem}\label{Thm:FormulaT}\cite[Theorem 1]{FroGel}
The family $\{ (x,y)_T := T_{d} (\gamma_{(\frac{x}{d} , \frac{y}{d})} ) ,\ d=gcd(x,y) \}$ is a basis for $S(\mathbb{T}^2, \mathbb{Q} [A^{\pm 1}])$ for which we have the following :
$$
(x,y)_T \star (z,t)_T = A^{xt-yz} (x+z,y+t)_T + A^{yz-xt} (x-z,y-t)_T
$$
\end{theorem}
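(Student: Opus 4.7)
The plan is to deduce the product formula from an embedding of $S(\mathbb{T}^2, \mathbb{Q}[A^{\pm 1}])$ into a quantum torus, under which the Frohman-Gelca basis elements become explicit Weyl-symmetric monomials whose product is transparent. The fact that $\{(x,y)_T\}$ is a basis follows immediately from the basis $\{\gamma_{(x,y)}^n\}$ recalled just above the statement, because $T_d$ is monic of degree $d$ in $X$, so the change of variables between $\gamma_{(x,y)}^d$ and $T_d(\gamma_{(x,y)})$ is triangular and invertible. As a preliminary step for the product formula, I would record a reformulation of the Chebyshev polynomials defined by (\ref{Chebychev1}): substituting $X = z+z^{-1}$ into the recursion yields $T_n(z+z^{-1}) = z^n + z^{-n}$ by induction on $n$, from which $T_m T_n = T_{m+n} + T_{|m-n|}$ (with $T_0=2$) follows by expanding $(z^m+z^{-m})(z^n+z^{-n})$.

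Next, introduce the quantum torus $T_q := \mathbb{Q}[A^{\pm 1}]\langle X^{\pm 1}, Y^{\pm 1}\rangle/(YX - A^{-2}XY)$ together with the involution $\sigma$ inverting each generator, and define the Weyl-symmetric element
\[
e_{x,y} := A^{-xy}\bigl(X^xY^y + X^{-x}Y^{-y}\bigr) \in T_q^\sigma .
\]
Using only the commutation rule $Y^y X^z = A^{-2yz} X^z Y^y$, a direct expansion of $e_{x,y}\cdot e_{z,t}$ produces four monomials which collect into two $\sigma$-orbits, and one computes
\[
e_{x,y}\cdot e_{z,t} = A^{xt-yz}\, e_{x+z,y+t} + A^{yz-xt}\, e_{x-z,y-t},
\]
which is precisely the identity of Theorem~\ref{Thm:FormulaT} transported to $T_q^\sigma$.

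It remains to construct an algebra embedding $\phi \colon S(\mathbb{T}^2, \mathbb{Q}[A^{\pm 1}]) \hookrightarrow T_q^\sigma$ sending $(x,y)_T \mapsto e_{x,y}$. For $\gcd(x,y)=1$, one represents $\gamma_{(x,y)}$ as a simple closed curve of slope $y/x$ and verifies $\phi(\gamma_{(x,y)}) = e_{x,y}$ by lifting to the universal cover $\mathbb{R}^2 \to \mathbb{T}^2$ and resolving its crossings with the generating meridian $\mu$ and longitude $\lambda$ via the Kauffman skein relation. Specializing the product rule to $(z,t)=(nx,ny)$ gives the scalar-free recursion $e_{x,y}\cdot e_{nx,ny} = e_{(n+1)x,(n+1)y} + e_{(n-1)x,(n-1)y}$, which is exactly the defining recursion of $T_{n+1}$; hence $T_n(e_{x,y}) = e_{nx,ny}$ and therefore $\phi((x,y)_T) = e_{x,y}$ for all $(x,y)$. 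Pulling back the Weyl-ordered product rule via the injection $\phi$ then yields the theorem. The main obstacle is precisely the geometric step $\phi(\gamma_{(x,y)}) = e_{x,y}$ for coprime $(x,y)$: while the identity in $T_q$ itself is a one-line computation, matching it with the skein class of a slope-$y/x$ curve requires careful combinatorial bookkeeping of all Kauffman resolutions on a fundamental domain, and it is at this step that the Kauffman skein relation and the quantum-torus $q$-commutation interact nontrivially.
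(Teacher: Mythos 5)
This statement is quoted by the paper from \cite[Theorem 1]{FroGel} and no proof is given in the text, so there is nothing internal to compare against; judging your proposal on its own terms, it has a genuine gap. The purely algebraic parts are fine: the basis claim does follow from unitriangularity of the change of basis from the multicurves $\gamma_{(x,y)}^n$ (since $T_d$ is monic), and your computation $e_{x,y}\cdot e_{z,t} = A^{xt-yz}e_{x+z,y+t} + A^{yz-xt}e_{x-z,y-t}$ in the quantum torus is correct (and $\sigma$ is indeed an algebra involution, since $Y^{-1}X^{-1}=(XY)^{-1}=A^{-2}X^{-1}Y^{-1}$). But the entire skein-theoretic content of the theorem has been pushed into the step you yourself flag as ``the main obstacle,'' and as structured that step is circular. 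To pull the product rule back along $\phi$, and even to deduce $\phi((x,y)_T)=e_{x,y}$ from $T_n(e_{x,y})=e_{nx,ny}$, you need $\phi$ to be an \emph{algebra} homomorphism. Verifying multiplicativity of $\phi$ means computing products of simple closed curves in $S(\mathbb{T}^2,\mathbb{Q}[A^{\pm 1}])$ and matching them with products in $T_q^\sigma$ --- but the product of two such curves in the skein algebra is exactly what the theorem asserts. Likewise, ``resolving the crossings of $\gamma_{(x,y)}$ with $\mu$ and $\lambda$'' is not a way to check $\phi(\gamma_{(x,y)})=e_{x,y}$ for a single embedded curve (which has no crossings); it is again a product computation in the skein algebra. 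So the proposal assumes a homomorphism whose existence is essentially equivalent to the conclusion.

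The logical order in Frohman--Gelca is the reverse of yours: they first prove the product-to-sum formula directly in the skein algebra, by putting the $(x,y)$- and $(z,t)$-curves in minimal position (they meet in $|xt-yz|$ points), resolving crossings, and running an induction on the geometric intersection number starting from the base case of two curves meeting once, where $\gamma_{(1,0)}\star\gamma_{(0,1)} = A\,\gamma_{(1,1)} + A^{-1}\gamma_{(1,-1)}$; the Chebyshev recursion $T_n = XT_{n-1}-T_{n-2}$ drives the induction for non-primitive classes. Only then does the isomorphism of $S(\mathbb{T}^2)$ with the symmetric subalgebra $T_q^\sigma$ follow, as a corollary of the already-established product rule. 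If you want to keep your quantum-torus framing, you must either carry out that skein induction anyway (at which point the embedding buys you nothing for this theorem) or construct $\phi$ by some independent means, e.g.\ through the action of $S(\mathbb{T}^2)$ on the skein module of the solid torus; either way the missing step is the actual mathematical content and cannot be left as a remark.
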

\begin{remark}
Here, we choose the convention $(0,0)_T  = 2\cdot \emptyset$.
\end{remark}

The proof of Proposition \ref{Prop:Tame} is similar to that in \cite{Det_Ek}, and starts with the following Lemma.

\begin{lemma}\label{Lemma:Polygone}
For any knot $K'$ and for every $f \in S(E_{K'} , \mathbb{Q} [A^{\pm 1}] )$, there exists a polygon $\mathcal{P}^f$ with vertices in $\mathbb{Z}^2$ and coefficients $c_{\alpha , \beta }^f \in \mathbb{Q} [A^{\pm 1}]$ such that,
$$
\left(\mathlarger{\mathlarger{\sum}}\limits_{(\alpha, \beta) \in \mathcal{P}^f \cap \mathbb{Z}^2 } c_{\alpha ,\beta }^f (\alpha ,\beta )_T \right) \cdot f =0
$$
Where $(\alpha , \beta )_T \in S(\partial E_{K'} , \mathbb{Q} [A^{\pm 1}]) \simeq S( \mathbb{T}^2, \mathbb{Q} [A^{\pm 1}])$.\\
Moreover, the coefficients can be chosen so that $\left\lbrace \begin{array}{lrcl}
(-\mathcal{P}^f)=\mathcal{P}^f\\
\forall (\alpha ,\beta ) \in \mathcal{P}^f \cap \mathbb{Z}^2 ,\ c_{\alpha,\beta}^f=c_{-\alpha ,-\beta}^f   \\ 
\forall (\alpha ,\beta ) \in \partial \mathcal{P}^f \cap \mathbb{Z}^2 ,\ c_{\alpha , \beta }^f \neq 0 \end{array}\right.
$.
\end{lemma}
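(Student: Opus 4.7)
The plan is to first produce any non-zero peripheral annihilator of $f$, then symmetrize it using the natural $\mathbb{Z}/2$-symmetry of the Frohman-Gelca basis, and finally to read off the polygon $\mathcal{P}^f$ and adjust it so its boundary lattice points all carry non-zero coefficients.

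The key input, which I would import directly from \cite{Det_Ek}, is that for any knot $K'$ and any $f \in S(E_{K'}, \mathbb{Q}[A^{\pm 1}])$, the peripheral annihilator
$$ \mathrm{Ann}(f) = \{ p \in S(\partial E_{K'}, \mathbb{Q}[A^{\pm 1}]) : p \cdot f = 0 \} $$
is a non-zero ideal; this is a ``quantum $A$-polynomial''-type statement that holds for arbitrary knots and does not rely on (\ref{Condition}). Concretely, picking any non-zero $p \in \mathrm{Ann}(f)$ and expanding in the Frohman-Gelca basis gives a finite non-trivial combination $p = \sum c_{\alpha, \beta}(\alpha, \beta)_T$ with $p \cdot f = 0$.

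Next I would use the observation $(\alpha, \beta)_T = (-\alpha, -\beta)_T$ in the Frohman-Gelca basis: indeed $d = \gcd(\alpha, \beta) = \gcd(-\alpha, -\beta)$ and the skein element $\gamma_{(\alpha/d,\beta/d)}$ agrees with $\gamma_{(-\alpha/d,-\beta/d)}$ because framed links carry no preferred orientation. Hence replacing $c_{\alpha, \beta}$ by $\tfrac{1}{2}\bigl(c_{\alpha, \beta} + c_{-\alpha, -\beta}\bigr) \in \mathbb{Q}[A^{\pm 1}]$ yields an equivalent non-trivial relation still annihilating $f$, whose coefficients now satisfy $c^f_{\alpha,\beta} = c^f_{-\alpha,-\beta}$; this is the second bullet point, and the first bullet point follows by taking any centrally symmetric lattice polygon containing the (now centrally symmetric) support.

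For the third bullet point, I would take $\mathcal{P}^f$ to be a (possibly non-convex) lattice polygon with $-\mathcal{P}^f = \mathcal{P}^f$, chosen so that its boundary lattice points are exactly the support of the symmetrized relation, and then minimize: among all non-trivial symmetric annihilators, pick one whose support is inclusion-minimal. Any boundary lattice point with zero coefficient could be dropped from the support, contradicting minimality; hence every boundary lattice point of $\mathcal{P}^f$ carries a non-zero coefficient. The main obstacle is the very first step: importing the non-vanishing of the peripheral annihilator for general knots from \cite{Det_Ek}. Once that is in hand, the symmetrization and polygon-selection steps are essentially formal and parallel the argument in that reference.
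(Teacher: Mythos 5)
Your proposal is correct and takes essentially the same route as the paper: the paper likewise starts from a non-zero peripheral annihilator of $f$ (quoting \cite{BelDet}, Corollary 1.7, valid for any compact oriented $3$-manifold with non-spherical boundary, rather than \cite{Det_Ek}), expands it in the Frohman--Gelca basis, and symmetrizes via $(\alpha,\beta)_T=(-\alpha,-\beta)_T$. The only difference is cosmetic: the paper simply asserts the non-vanishing of the coefficients on $\partial\mathcal{P}^f\cap\mathbb{Z}^2$ where you supply a support-minimality argument.
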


\begin{proof}
It is stated in \cite[Corollary 1.7]{BelDet} that as long as the boundary of a compact oriented 3-manifold $M$ is not a disjoint union of spheres, we have that for every $f\in S(M,\mathbb{Z} [A^{\pm 1}])$, there exists a non-zero element $z\in S(\partial M ,\mathbb{Z} [A^{\pm 1}])$ such that $z.f =0$. It implies in our case the existence of a non-zero element $z$ in $S( \partial E_{K'} ,\mathbb{Q} [A^{\pm 1}])$ such that $z.f=0$.\\
Since $S( \partial E_{K'} ,\mathbb{Q} [A^{\pm 1}])\simeq S(\mathbb{T}^2 , \mathbb{Q} [A^{\pm 1} ])$, we can express $z$ in the Frohman-Gelca basis and get :
$$
\left(\mathlarger{\mathlarger{\sum}}\limits_{(\alpha, \beta) \in \mathcal{P}^f \cap \mathbb{Z}^2 } c_{\alpha ,\beta }^f (\alpha ,\beta )_T \right) \cdot f =0
$$ 
where $\mathcal{P}^f$ is a polygon with vertices in $\mathbb{Z}^2$. Because $(\alpha ,\beta)_T = (-\alpha , - \beta )_T$ in the Frohman-Gelca basis, this relation can be chosen such that $(-\mathcal{P}^f)=\mathcal{P}^f$ and $c_{\alpha,\beta}^f=c_{-\alpha ,-\beta}^f \in \mathbb{Q} [A^{\pm 1}]$ for $(\alpha ,\beta )\in \mathcal{P}^f \cap \mathbb{Z}^2$. Moreover, $c_{\alpha , \beta }^f \neq 0$ for $(\alpha ,\beta ) \in \partial \mathcal{P}^f \cap \mathbb{Z}^2$.
\end{proof}

Let $K$ be a knot verifying condition (\ref{Condition}) and let $F$ be a set of generators for $S(E_K , \mathbb{Q} [A^{\pm 1}] )$ over $S(\partial E_K , \mathbb{Q} [A^{\pm 1}] )$. For each $f\in F$, let $\mathcal{P}^{f}$ and $c_{\alpha, \beta}^{f}$ be given by Lemma \ref{Lemma:Polygone} and let
$$
U := \mathlarger{\prod\limits_{f\in F}}\  \mathlarger{\prod\limits_{(\alpha,\beta)\in \partial\mathcal{P}^{f}\cap \mathbb{Z}^2}} c_{\alpha ,\beta}^{f}
$$

Now that we have introduced all the elements we needed, we can state the Corollary \ref{Cor:Tame} more precisely :

\begin{prop}\label{Prop:Tame}
For all slopes $r$ that are not slopes of any of the polygons $\mathcal{P}^f$, $S(E_K (r) ,R_U )$ is finitely generated over $R_U = \mathbb{Q} [A^{\pm 1} ] [U^{-1}]$.
\end{prop}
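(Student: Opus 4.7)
The plan is to combine two ingredients inside $S(E_K(r), R_U)$: shifted versions of the Lemma \ref{Lemma:Polygone} relation that let us solve for extreme skein monomials with invertible boundary coefficients, and the Dehn filling relation $(p,q)_T = -A^2 - A^{-2}$, which yields a two-term recursion along each line of slope $r$. Write $r = p/q$ with $\gcd(p,q)=1$ and introduce the linear functional $\ell(\alpha,\beta) = q\alpha - p\beta$, which vanishes on $(p,q)$ and so is constant on each line of slope $r$. Because $r$ is not an edge slope of any $\mathcal{P}^f$, the restriction of $\ell$ to $\mathcal{P}^f$ attains its maximum at a unique vertex $v_+^f = (\alpha_+^f,\beta_+^f)$, with $\ell(v_+^f) = \ell_+^f > 0$; by $-\mathcal{P}^f = \mathcal{P}^f$ the unique minimizer is $-v_+^f$.

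For any $(x,y) \in \mathbb{Z}^2$ I would act on the Lemma \ref{Lemma:Polygone} relation by $(x,y)_T$ and expand via Theorem \ref{Thm:FormulaT}; the coefficient symmetry $c_{\alpha,\beta}^f = c_{-\alpha,-\beta}^f$ collapses the two resulting sums into one, producing the shifted relation
$$\sum_{(\alpha,\beta) \in \mathcal{P}^f \cap \mathbb{Z}^2} c_{\alpha,\beta}^f\, A^{x\beta - y\alpha}\,(x+\alpha,\, y+\beta)_T \cdot f = 0$$
in $S(E_K)$, hence in $S(E_K(r))$. Given a target $w \in \mathbb{Z}^2$ with $\ell(w) > \ell_+^f$, the choice $(x,y) = w - v_+^f$ places $w$ at the unique maximum-$\ell$ lattice point of the shifted polygon, and its coefficient $c_{v_+^f}^f A^{x\beta_+^f - y\alpha_+^f}$ is invertible in $R_U$ because $c_{v_+^f}^f$ divides $U$. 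Solving for $(w)_T \cdot f$ and using $(w)_T = (-w)_T$ to reduce to $\ell(w) \geq 0$, strong induction on $\ell(w)$ expresses every $(\alpha,\beta)_T \cdot f$ as an $R_U$-linear combination of those with $|\ell(\alpha,\beta)| \leq \ell_+^f$.

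To cut the strip $|\ell| \leq \ell_+^f$ down to finitely many generators per $f$, I exploit the Dehn filling: since $\gcd(p,q)=1$ the curve $(p,q)_T$ bounds a disk in the attached solid torus, so it equals $-A^2 - A^{-2}$ in $S(E_K(r))$. Substituting into the product-to-sum formula yields
$$(-A^2 - A^{-2})(\alpha,\beta)_T = A^{p\beta - q\alpha}(\alpha+p,\, \beta+q)_T + A^{q\alpha - p\beta}(\alpha-p,\, \beta-q)_T,$$
a two-term linear recursion along each line of slope $r$. Hence on each such line two consecutive elements generate all others (over $\mathbb{Q}[A^{\pm 1}]$, a fortiori over $R_U$), and since the strip contains only $2\ell_+^f+1$ such lines, finitely many seeds suffice per $f \in F$. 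Taking the union over the finite generating set $F$ produces a finite $R_U$-generating set of $S(E_K(r), R_U)$.

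The main obstacle is the bookkeeping inside the induction on $\ell(w)$. For $\ell(w) > \ell_+^f$, the other lattice points $w'$ of the shifted polygon lie in $\ell(w') \in [\ell(w) - 2\ell_+^f,\, \ell(w) - 1]$, and one must check $|\ell(w')| \leq \ell(w) - 1$; this reduces to $2\ell_+^f - \ell(w) \leq \ell(w) - 1$, equivalent to $\ell(w) \geq \ell_+^f + 1$, which is precisely the inductive hypothesis. One should also verify that the extreme coefficient is not perturbed by a ``phantom'' contribution via the identification $(w)_T = (-w)_T$: this would require $v_+^f - 2w \in \mathcal{P}^f$, but $\ell(v_+^f - 2w) = \ell_+^f - 2\ell(w) < -\ell_+^f$ rules this out. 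The slope-avoidance hypothesis enters exactly to guarantee that $v_+^f$ is a unique maximizer of $\ell$, which is what makes the solving step possible.
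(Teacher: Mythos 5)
Your proof is correct and follows essentially the same route as the paper's: your functional $\ell(\alpha,\beta)=q\alpha-p\beta$ is the paper's $\lambda$, the shifted polygon relation is the paper's relation (1), and your ``two consecutive elements generate each line of slope $r$'' step is exactly the paper's second functional $\epsilon$ with $0\leq\epsilon\leq 1$. The only (harmless) refinements are that you invert only the coefficients at the extremal vertices $\pm v_+^f$ rather than all of $\partial\mathcal{P}^f\cap\mathbb{Z}^2$ --- which matches the remark following the paper's proof --- and that you explicitly rule out the ``phantom'' coincidence $(w)_T=(-w)_T$ inside the shifted relation, a point the paper leaves implicit.
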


\begin{proof}

To start with, since $S(E_K , R_U) = S(E_K , \mathbb{Q} [ A^{\pm 1 }]) \otimes R_U$, $F$ also generates $S(E_K , R_U)$ over $S(\partial E_K , R_U )$.\\
Since every element of $S(E_K (r) , R_U)$ can be isotoped into $E_K$, to show that $S(E_K (r) , R_U)$ is finitely generated over $R_U$, it suffices to show that $S(E_K , R_U)$ if finitely generated over $R_U$ as a subspace of $S(E_K (r) , R_U )$. This can be done by showing that $S(\partial E_K , R_U) \cdot f \subset S(E_K (r) , R_U)$ is finitely generated over $R_U$ for every $f\in F$. In the following, we fix a generator $f\in F$.

First, we can multiply the relation of Lemma \ref{Lemma:Polygone} on the left with an element $(\mu , \nu)_T \in S(\partial E_K , R_U)$. Then, using the product-to-sum formula, we obtain : 
\begin{align*}
0 &= (\mu , \nu)_T \star \left(\mathlarger{\mathlarger{\sum}}\limits_{(\alpha, \beta) \in \mathcal{P}^f \cap \mathbb{Z}^2 } c_{\alpha ,\beta }^f (\alpha ,\beta )_T \right) \cdot f 
\\&= \left(\mathlarger{\mathlarger{\sum}}\limits_{(\alpha, \beta) \in \mathcal{P}^f \cap \mathbb{Z}^2 }  A^{\mu \beta - \nu \alpha } c_{\alpha ,\beta }^f (\alpha + \mu ,\beta + \nu )_T  + \mathlarger{\mathlarger{\sum}}\limits_{(\alpha, \beta) \in \mathcal{P}^f \cap \mathbb{Z}^2 }  A^{-\mu \beta + \nu \alpha } c_{\alpha ,\beta }^f (\alpha - \mu ,\beta - \nu )_T   \right) \cdot f   \\&
= \left(\mathlarger{\mathlarger{\sum}}\limits_{(\alpha, \beta) \in \mathcal{P}^f \cap \mathbb{Z}^2 }  A^{\mu \beta - \nu \alpha } c_{\alpha ,\beta }^f (\alpha + \mu ,\beta + \nu )_T  + \mathlarger{\mathlarger{\sum}}\limits_{(\alpha, \beta) \in (-\mathcal{P}^f) \cap \mathbb{Z}^2 }  A^{\mu \beta - \nu \alpha } c_{-\alpha ,-\beta }^f (-\alpha - \mu , - \beta - \nu )_T   \right) \cdot f 
\end{align*}
Since $(-\alpha - \mu , -\beta -\nu )_T = (\alpha + \mu , \beta + \nu )_T$, $c_{-\alpha,-\beta}^f=c_{\alpha ,\beta}^f$ and $(-\mathcal{P}^f )= \mathcal{P}^f$, the last line becomes:
\begin{equation}\label{1}\tag{1}
\left(\mathlarger{\mathlarger{\sum}}\limits_{(\alpha, \beta) \in \mathcal{P}^f \cap \mathbb{Z}^2 } 2 A^{\mu \beta - \nu \alpha } c_{\alpha ,\beta }^f (\alpha + \mu ,\beta + \nu )_T \right) \cdot f = 0
\end{equation}

Keeping in mind Relation (\ref{1}), we get a second relation from the surgery : performing a surgery of slope $r=\dfrac{q}{p}$ (with $gcd(p,q)=1$) on $K$ makes the curve $\gamma_{p,q}=(p,q)_T$ trivial. Thus, $(p,q)_T \cdot f = (-A^2 -A^{-2}) \cdot f$ in $S(E_K (r ),  R_U )$. We then multiply by $(\mu , \nu )_T$ on the right and use the product to sum formula to deduce more relations, we obtain :
\begin{equation}\label{2}\tag{2}
\left( A^{p\nu-q\mu} (p+ \mu ,q+ \nu )_T + (-A^2 - A^{-2}) (\mu ,\nu)_T + A^{q\mu-p\nu} (p-\mu ,q- \nu)_T \right) \cdot f = 0
\end{equation}

To show that $S(\partial E_K , R_U) \cdot f $ is finitely generated over $R_U$, choose two morphisms $\lambda ,\epsilon : \mathbb{Z}^2 \rightarrow \mathbb{Z}$ such that $\lambda \neq 0$, $\lambda (p,q) = 0$ and $\epsilon (p,q) = 1$.\\
Since $\frac{q}{p}$ is not a slope of $\mathcal{P}^f$, $\lambda$ has a unique maximum $M$ and a unique minimum $-M$ on $\mathcal{P}^f$.\\
Let $(x,y)$ be such that $\lambda (x,y)\geq M$ and let $(a,b)$ realize the maximum for $\lambda$ over $\mathcal{P}^f$. Relation (\ref{1}) with $(\mu,\nu) := (x-a,y-b)$ gets $(x,y) = (a + \mu , b + \nu)$ to be the unique maximum for $\lambda$ between all the vertices involved in the relation. Since $c_{(a,b)}^f \neq 0$, this gives an expression of $(x,y)_T \cdot f$ as a linear combination of elements with lesser images by $\lambda$. Note that we need to inverse the coefficients $2A^{\mu \beta - \nu \alpha} c_{\alpha,\beta}^f$ for every vertices $(\alpha ,\beta )$ of $\mathcal{P}^f$, which may not be possible in $\mathbb{Q} [A^{\pm 1}]$ but is possible in $R_U$.\\
By doing this also for the unique minimum $-M$ of $\mathcal{P}^f$, we find that $S(\partial E_K , R_U) \cdot f$ is spanned by elements $(x,y)_T \cdot f$ such that $-M\leq \lambda (x,y) \leq M$.

Similarly, since $A^{p\nu -q\mu}$ is invertible and because $(p-\mu, q-\nu )_T = (\mu -p,\nu -q)_T$, relation (\ref{2}) expresses $(\mu + p , \nu + q)_T \cdot f$ (resp. $(\mu - p , \nu - q)_T \cdot f$) as a linear combination of elements with same image by $\lambda$ but lesser (resp. greater) image by $\epsilon$.

In the end, $S(\partial E_K , R_U) \cdot f$ is spanned by elements $(x,y)_T \cdot f$ such that $-M\leq \lambda (x,y) \leq M$ and $0\leq \epsilon (x,y) \leq 1$ which have coordinates in the intersection of two non-parallel bands of $\mathbb{Z}^2$ and thus form a finite set.
\end{proof}
\begin{remark}
Fixing the slope and the associated $\lambda$ (if possible), the choice of $U$ can be reduced to the product of coefficients $c_{\alpha ,\beta }^f$ with $(\alpha,\beta)$ realising the maximum and the minimum of $\lambda$ on $\mathcal{P}^f$ for each generator $f$.
\end{remark}
\subsection{The proof of Theorem \ref{Thm:Main}}\label{SubSec:Proof}
We adapt the method of \cite[Theorem 3.1]{DKS25} under condition (\ref{Condition}) :
\begin{proof}[Proof of Theorem \ref{Thm:Main}]
By Proposition \ref{Prop:Tame}, there exists a polynomial $U\in\mathbb{Q} [A^{\pm 1}]$ for which $S( E_K (r) ,R_U)$ is finitely generated over $R_U=\mathbb{Q} [A^{\pm 1}] [U^{-1}]$.\\
The ring $R_U$ is a PID as a localization of a PID (see \cite[Prop. 3.11]{AtiMac} for instance). Then, having $S(E_K (r), R_U )$ finitely generated over $R_U$ gives it a decomposition as
$$
S(E_K (r), R_U ) = F\ \underset{i}{\bigoplus}\ \faktor{R_U}{q_i^{s_i}}
$$
where $F$ is a free $R_U$-module and the direct sum is finite over certain powers of certain irreducibles $q_i \in R_U$, $q_i\neq 1$, possibly repeating themselves.\\
It follows that $\dim_{\mathbb{Q} (A)} (S(E_K (r))) = rk_{R_U} (F)$ :
$$
S (E_K (r)) = S(E_K (r), R_U ) \otimes \mathbb{Q} (A) \simeq \left(\mathbb{Q} (A)\right)^{rk_{R_U} (F)} 
$$
On the other hand, let $\zeta$ be a primitive $2N$-root of unity, such that $\zeta$ is not a root of any $q_i$ nor a root of $U$. Thus,  $\faktor{R_U}{q_i^{s_i}} \underset{A=\zeta}{\otimes} \mathbb{C} = 0$ and :
$$
S_\zeta (E_K (r)) = S(E_K (r), R_U ) \underset{A=\zeta}{\otimes} \mathbb{C} \simeq \mathbb{C}^{rk_{R_U} (F)} 
$$
Thus, $\dim_{\mathbb{Q} (A)} (S(E_K (r))) = rk_{R_U} (F) = \dim_{\mathbb{C}} (S_\zeta (E_K (r)))$.
\end{proof}

\section{Dimension of \texorpdfstring{$S_\zeta (M)$}{}}\label{Sec:CharVar}
Under Condition (\ref{Condition}), as a 3-manifold with finite character variety (Corollary \ref{Cor:FiniteX}), knowing $S_\zeta (E_K (r) )$ only requires to understand the localized skein modules $S_{\zeta,[\rho ]} (E_K (r))$.

In order to explain the state of the art on localized skein modules, we will use some notions of affine PI algebras :

\subsection{Almost Azumaya algebras}
\begin{definition}
Let $\mathcal{A}$ be a $\mathbb{C}$-algebra.\\
If $\mathcal{A}$ is affine, prime with finite rank over its center, then $\mathcal{A}$ is said to be almost Azumaya.
\end{definition}
In this case (see \cite[III.1.2]{BroGoo}), there is an integer $D$ such that the dimension of $\mathcal{A}\underset{Z(\mathcal{A})}{\otimes} Frac(Z(\mathcal{A}))$ over $Frac(Z(\mathcal{A}))$ is $D^2$. The integer $D$ is called the PI-degree of $\mathcal{A}$.
\begin{definition}
If $\mathcal{A}$ is almost Azumaya, the Azumaya locus is 
$$Azu(\mathcal{A})=\{ \mathfrak{m} \in MaxSpec(Z(\mathcal{A})) , \faktor{\mathcal{A}}{\mathfrak{m} \mathcal{A}} \simeq M_{D} (\mathbb{C} )\}$$
For a finitely generated $\mathcal{A}$-module $\mathcal{K}$, we also define 
$$
Azu'_{\mathcal{A}} (\mathcal{K}) := \{ \mathfrak{m} \in MaxSpec(Z(\mathcal{A}) ), \dim_{\mathbb{C}} (\faktor{\mathcal{K}}{\mathfrak{m}\mathcal{K}}) =\dim_{ Frac(Z(\mathcal{A}))}  (\mathcal{K}\underset{Z(\mathcal{A})}{\otimes} Frac(Z(\mathcal{A}))) \} \
$$
\end{definition}
\begin{prop}\cite[Theorem III.1.7]{BroGoo}\label{prop:AzuOpen}
$Azu(\mathcal{A})$ is Zariski open.
\end{prop}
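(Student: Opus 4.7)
The plan is to exhibit a non-zero ideal $J \subset Z(\mathcal{A})$ whose vanishing locus in $MaxSpec(Z(\mathcal{A}))$ is exactly the complement of $Azu(\mathcal{A})$; once this is done, openness of $Azu(\mathcal{A})$ is immediate.

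First I would invoke Posner's theorem, applied to the prime PI-algebra $\mathcal{A}$: it gives $\mathcal{A}\underset{Z(\mathcal{A})}{\otimes} Frac(Z(\mathcal{A}))\simeq M_D(Frac(Z(\mathcal{A})))$, a central simple algebra of dimension $D^2$, so in particular every simple $\mathcal{A}$-module has $\mathbb{C}$-dimension at most $D$. Next, I would bring in a Formanek central polynomial $g(x_1,\dots,x_r)$ for $M_D$: an element of the free associative $\mathbb{C}$-algebra whose evaluations on $M_D(\mathbb{C})$ land in the scalar matrices and are not identically zero, while $g$ vanishes identically on $M_k(\mathbb{C})$ for every $k<D$. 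Let $J\subset Z(\mathcal{A})$ be the ideal generated by all evaluations $g(a_1,\dots,a_r)$ with $a_i\in\mathcal{A}$; centrality of $g$ on $M_D$ places these evaluations inside $Z(\mathcal{A})$ via the Posner embedding, and Posner's theorem also guarantees $J\neq 0$.

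Then I would prove the key equality $Azu(\mathcal{A})=\{\mathfrak{m}\in MaxSpec(Z(\mathcal{A}))\ :\ J\not\subset\mathfrak{m}\}$. The right-hand side is manifestly Zariski open. For the inclusion $\supset$, if $\mathfrak{m}$ lies in the right-hand side then some $g(a_1,\dots,a_r)\notin\mathfrak{m}$, so $g(\overline{a_1},\dots,\overline{a_r})$ is non-zero in $\mathcal{A}/\mathfrak{m}\mathcal{A}$; since $g$ kills every matrix algebra of size strictly less than $D$, the semisimple quotient of $\mathcal{A}/\mathfrak{m}\mathcal{A}$ must admit a simple summand isomorphic to $M_D(\mathbb{C})$. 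Combined with the bound $\dim_{\mathbb{C}}(\mathcal{A}/\mathfrak{m}\mathcal{A})\leq D^2$ coming from module-finiteness of $\mathcal{A}$ over $Z(\mathcal{A})$ together with upper semicontinuity of fibre dimension and the generic rank $D^2$, this forces $\mathcal{A}/\mathfrak{m}\mathcal{A}\simeq M_D(\mathbb{C})$. The reverse inclusion $\subset$ is direct: inside $M_D(\mathbb{C})$ one may pick matrices on which $g$ evaluates to a non-zero scalar, and lifting them to $\mathcal{A}$ produces an element of $J$ not in $\mathfrak{m}$.

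The main obstacle is the last implication in the paragraph above: upgrading the statement "$M_D(\mathbb{C})$ appears as a simple quotient of $\mathcal{A}/\mathfrak{m}\mathcal{A}$" to the equality $\mathcal{A}/\mathfrak{m}\mathcal{A}\simeq M_D(\mathbb{C})$. This is exactly where the almost Azumaya hypothesis earns its keep: primeness plus finite rank over the center forces constant generic rank $D^2$, and combined with upper semicontinuity of the fibre dimension this squeezes $\mathcal{A}/\mathfrak{m}\mathcal{A}$ down to exactly $M_D(\mathbb{C})$ on the open locus cut out by $J$, simultaneously killing the Jacobson radical and pinning down the total $\mathbb{C}$-dimension.
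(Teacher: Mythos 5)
The paper gives no proof of this proposition: it is quoted verbatim from \cite[Theorem III.1.7]{BroGoo}, so you are in effect reconstructing the argument of that reference. Your outline --- describe $Azu(\mathcal{A})$ as the non-vanishing locus of the ideal $J\subset Z(\mathcal{A})$ generated by evaluations of a central polynomial for $D\times D$ matrices (the Formanek center) --- is exactly the right characterization, and both the inclusion $\subset$ and the reduction of $\supset$ to ``the semisimple quotient of $\mathcal{A}/\mathfrak{m}\mathcal{A}$ has an $M_D(\mathbb{C})$ factor'' are sound. (One small inaccuracy: Posner's theorem gives that $\mathcal{A}\otimes_{Z(\mathcal{A})}Frac(Z(\mathcal{A}))$ is a central simple algebra of dimension $D^2$; it need not split as $M_D(Frac(Z(\mathcal{A})))$ over $Frac(Z(\mathcal{A}))$ itself, though this does not affect the rest of the argument.)

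The genuine gap is the step upgrading ``$M_D(\mathbb{C})$ is a simple quotient of $\mathcal{A}/\mathfrak{m}\mathcal{A}$'' to ``$\mathcal{A}/\mathfrak{m}\mathcal{A}\simeq M_D(\mathbb{C})$''. You derive the bound $\dim_{\mathbb{C}}(\mathcal{A}/\mathfrak{m}\mathcal{A})\leq D^2$ from upper semicontinuity of the fibre dimension together with the generic rank $D^2$, but upper semicontinuity gives the \emph{opposite} inequality: the fibre dimension of a finitely generated module can only jump up under specialization, so at a closed point one obtains $\dim_{\mathbb{C}}(\mathcal{A}/\mathfrak{m}\mathcal{A})\geq D^2$ --- this is precisely how the paper proves Proposition \ref{prop:Azu'Open} --- and for $\mathcal{A}$ merely module-finite (not projective) over $Z(\mathcal{A})$ the fibre dimension can genuinely exceed $D^2$ at special points. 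The upper bound does hold on the locus $\{\mathfrak{m} : J\not\subset\mathfrak{m}\}$, but that is the content of the Artin--Procesi theorem: when some evaluation of the central polynomial becomes a unit in $Z(\mathcal{A})_{\mathfrak{m}}$, the localization $\mathcal{A}_{\mathfrak{m}}$ is Azumaya over $Z(\mathcal{A})_{\mathfrak{m}}$ of constant rank $D^2$, and only then does $\mathcal{A}/\mathfrak{m}\mathcal{A}\simeq M_D(\mathbb{C})$ follow. As written, the hardest part of the theorem is replaced by an appeal to semicontinuity in the wrong direction; invoking Artin--Procesi (or \cite[Theorem III.1.6]{BroGoo}) at that point would close the gap.
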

\begin{prop}\label{prop:Azu'Open}
$Azu'_{\mathcal{A}} (\mathcal{K})$ is Zariski open.
\end{prop}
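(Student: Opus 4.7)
The plan is to reduce the statement to two classical facts about finitely generated modules over a commutative Noetherian domain: upper semicontinuity of fiber dimension, and the fact that the minimum fiber dimension equals the generic rank.

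First I would set the stage. Let $Z := Z(\mathcal{A})$. Because $\mathcal{A}$ is an affine $\mathbb{C}$-algebra which is finitely generated as a module over $Z$, the Artin--Tate lemma ensures that $Z$ is itself a finitely generated $\mathbb{C}$-algebra, and $Z$ is a domain because $\mathcal{A}$ is prime. Since $\mathcal{A}$ is finite over $Z$ and $\mathcal{K}$ is a finitely generated $\mathcal{A}$-module, $\mathcal{K}$ is also a finitely generated $Z$-module. Write $r := \dim_{Frac(Z)}\bigl(\mathcal{K}\otimes_Z Frac(Z)\bigr)$ for the generic rank, and $d(\mathfrak{m}) := \dim_{\mathbb{C}}(\mathcal{K}/\mathfrak{m}\mathcal{K})$ for the fiber dimension. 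With this notation, $Azu'_{\mathcal{A}}(\mathcal{K}) = \{\mathfrak{m} \in MaxSpec(Z) : d(\mathfrak{m}) = r\}$.

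The main step is upper semicontinuity of $d$. Since $Z$ is Noetherian and $\mathcal{K}$ is finitely generated, I would pick a finite presentation $Z^m \xrightarrow{M} Z^n \twoheadrightarrow \mathcal{K} \to 0$. Reducing modulo a maximal ideal $\mathfrak{m}$ gives $d(\mathfrak{m}) = n - \operatorname{rank}_{Z/\mathfrak{m}}(M \bmod \mathfrak{m})$. The locus where some fixed $s \times s$ minor of $M$ is nonzero is Zariski open, so $\{\mathfrak{m} : \operatorname{rank}(M \bmod \mathfrak{m}) \geq s\}$ is open, which is equivalent to $\{d \leq n-s\}$ being open. Hence $d$ is upper semicontinuous on $MaxSpec(Z)$.

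Finally I would check that $r$ is the minimum value of $d$, so that $Azu'_{\mathcal{A}}(\mathcal{K}) = \{d \leq r\}$ is open by the previous step. For one inequality, by Nakayama's lemma $d(\mathfrak{m})$ equals the minimal number of generators of $\mathcal{K}_{\mathfrak{m}}$ as a $Z_{\mathfrak{m}}$-module, and such a generating set remains a spanning set after tensoring with $Frac(Z)$, whence $d(\mathfrak{m}) \geq r$. Conversely, generic freeness supplies a nonempty open $U \subseteq MaxSpec(Z)$ on which $\mathcal{K}$ is locally free of rank $r$, so $d \equiv r$ on $U$. I do not expect a real obstacle: the only subtle point is to invoke the correct form of generic freeness, but equivalently one can just observe that the locus where the presentation matrix $M$ attains its maximal $Frac(Z)$-rank is a nonempty open subset of $MaxSpec(Z)$, which gives the same conclusion directly from the presentation used above.
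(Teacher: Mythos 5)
Your proof is correct, and its skeleton is the same as the paper's: both arguments rest on upper semicontinuity of the fiber dimension $\mathfrak{m}\mapsto\dim_{\mathbb{C}}(\mathcal{K}/\mathfrak{m}\mathcal{K})$ together with the identification of the generic rank $r=d$ as the minimum value, so that $Azu'_{\mathcal{A}}(\mathcal{K})=\{d(\mathfrak{m})\leq r\}$ is open. The differences are in how the two sub-steps are justified. The paper cites semicontinuity from Hartshorne on all of $Spec(Z(\mathcal{A}))$ and obtains minimality of $r$ by observing that $(0)$ is a generic point lying in every nonempty open set, so $\Lambda((0))=d$ must be the minimum; you instead make the argument self-contained by choosing a finite presentation $Z^m\to Z^n\to\mathcal{K}\to 0$ (legitimate, since the Artin--Tate lemma makes $Z$ an affine $\mathbb{C}$-algebra, hence Noetherian), proving semicontinuity via non-vanishing of minors, and obtaining $d(\mathfrak{m})\geq r$ directly from Nakayama's lemma, which lets you work entirely on $MaxSpec(Z)$ and avoid the generic point. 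Your closing appeal to generic freeness is not actually needed for openness --- it only shows the locus is nonempty --- but it is harmless. Both routes are sound; yours is more elementary and explicit, the paper's is shorter at the cost of importing the scheme-theoretic semicontinuity statement.
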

\begin{proof}
Let $d=\dim_{ Frac(Z(\mathcal{A}))}  (\mathcal{K}\otimes Frac(Z(\mathcal{A})))$.\\
For $\mathfrak{p}\in Spec(Z(\mathcal{A})) $, let $\kappa (\mathfrak{p} ) = \faktor{Z(\mathcal{A})_{\mathfrak{p}}}{\mathfrak{p}Z(\mathcal{A})_{\mathfrak{p}}}$ be the residue field of $\mathfrak{p}$.\\
Let $\Lambda : Spec (Z (\mathcal{A})) \rightarrow \mathbb{N}$ be defined by $\Lambda (\mathfrak (p)) = \dim_{\kappa (\mathfrak{p} )} (\mathcal{K} \otimes \kappa (\mathfrak{p}))$.\\
For $\mathfrak{m}\in MaxSpec (Z(\mathcal{A}))$, we have that $\kappa(\mathfrak{m} ) = \mathbb{C}$ and $\Lambda (\mathfrak{m} ) = \dim_{\mathbb{C}} \faktor{\mathcal{K}}{\mathfrak{m}\mathcal{K}}$. Moreover, since $\mathcal{A}$ is prime, $Z(\mathcal{A} )$ has no zero divisors, then $(0)\in Spec(Z(\mathcal{A}))$ and we have $\kappa ( (0) )= Frac(Z(\mathcal{A}))$ and $\Lambda ( (0) ) = d$.\\
It is known that $\Lambda$ is upper semi-continuous (\cite[Example 12.7.2]{Har}). In particular, for every $\mathfrak{p} \in Spec(Z(\mathcal{A}))$, the set $\{ \mathfrak{p'} \in Spec(Z(\mathcal{A})) , \Lambda (\mathfrak{p'} ) \leq \Lambda(\mathfrak{p} ) \}$ is an open neighborhood of $\mathfrak{p}$.\\
Moreover, due to the form of the usual Zariski basis, $(0)$ is included in every neighborhood of any prime ideal ($(0)$ is called a generic point of $Spec(Z(\mathcal{A}))$).\\
This shows that $d=\Lambda ((0))$ is the minimal dimension for $\{ \kappa (\mathfrak{p} ) \}_{\mathfrak{p}\in Spec (Z(\mathcal{A}))}$.\\
It implies that $Azu'_{\mathcal{A}} (\mathcal{K}) = \{ \mathfrak{m} \in MaxSpec(Z(\mathcal{A})) , \Lambda (\mathfrak{m} ) \leq d \}$ and then, again by the upper semi-continuity of $\Lambda$, $Azu'_{\mathcal{A}} (\mathcal{K}) $ is open.
\end{proof}
\subsection{Reduced skein module}
Following \cite{TehFroKan}, for $M$ be a closed oriented 3-manifold, we consider an Heegaard splitting $M=:H_1 \underset{\Sigma}{\cup} H_2$ of $M$. 

Then, the map $S_{-1} (\Sigma ) \rightarrow S_{-1} (M )$ is surjective and we can consider $Spec (S_{-1} (M))$ as a subspace of $Spec (S_{-1} (\Sigma ))$. 

Also, since $\partial H_i = \Sigma $, we view $S_\zeta (H_i )$ as a $S_\zeta (\Sigma )$-module.

Moreover, by \cite[Theorem 4.1]{FKL}, $Z (S_\zeta (\Sigma ))\simeq S_{-1} (\Sigma )$ through the map given by Theorem \ref{Thm:BohWon}.

\begin{prop}\cite[Theorem 5.1]{FKL}\label{Prop:FKL}
Let $i\in \{ 1,2\}$.\\
The algebra $\mathcal{A} := S_\zeta (\Sigma )$ is almost Azumaya and $\mathcal{K}:= S_\zeta (H_i )$ is a finitely generated $\mathcal{A}$-module.
\end{prop}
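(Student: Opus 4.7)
The plan is to separately verify the three defining properties of almost Azumaya for $\mathcal{A} := S_\zeta(\Sigma)$ (affineness, primeness, and finite rank over the center), then address the finite generation of $\mathcal{K} := S_\zeta(H_i)$ over $\mathcal{A}$. Affineness follows from the classical Bullock--Przytycki argument: after fixing a pants decomposition of $\Sigma$, the cuff curves together with finitely many dual curves generate $S_\zeta(\Sigma)$ as a $\mathbb{C}$-algebra, because skein relations reduce any multicurve to a polynomial in these generators. For primeness, I would show that $S_\zeta(\Sigma)$ is actually a domain by filtering the multicurve basis by intersection coordinates with a fixed ideal triangulation (or pants decomposition) and observing that the associated graded embeds into a quantum torus, which is a domain; since an algebra whose associated graded is a domain is itself a domain, the conclusion follows.

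The main obstacle is to prove the finite rank of $\mathcal{A}$ over its center $Z(\mathcal{A})$. By the cited Theorem 4.1 of \cite{FKL}, the threading map identifies $Z(\mathcal{A})$ with the image of $S_{-1}(\Sigma)$ in $S_\zeta(\Sigma)$; in particular $T_N(\gamma)$ is central for every simple closed curve $\gamma$. The strategy is then to exploit an embedding of $S_\zeta(\Sigma)$ into a quantum torus via the quantum trace map (possibly after puncturing $\Sigma$ and using the stated skein algebra framework of Bonahon--Le): at a $2N$-th root of unity, the center of such a quantum torus is generated by the $N$-th powers of the generators, and the quantum torus itself is free over its center with basis consisting of monomials whose exponents are strictly less than $N$. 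Pulling this finiteness back through the embedding yields that finitely many multicurves, namely those whose coordinates with respect to the fixed triangulation are all bounded by $N$, span $\mathcal{A}$ as a module over $Z(\mathcal{A})$. The technical subtlety is to verify that the embedding behaves well with respect to the threading map, so that the pulled-back central monomials indeed land in the image of $S_{-1}(\Sigma)$.

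For the finite generation of $\mathcal{K}$ over $\mathcal{A}$, one observes that $\mathcal{K}$ is in fact cyclic. The handlebody $H_i$ admits a spine consisting of a wedge of $g$ circles that embeds in $\partial H_i = \Sigma$: take $g$ simple closed curves on $\Sigma$ based at a common point that map to a free generating set of $\pi_1(H_i)$. Any link $L \subset H_i$ is therefore isotopic to one lying in a collar $\Sigma \times [0, \epsilon) \subset H_i$, which realizes $[L] \in \mathcal{K}$ as $x \cdot [\emptyset]$ for some $x \in \mathcal{A}$, giving $\mathcal{K} = \mathcal{A} \cdot [\emptyset]$ and a fortiori finite generation.
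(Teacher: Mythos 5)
The paper does not prove this statement at all: it is imported verbatim as a citation of \cite[Theorem 5.1]{FKL} (together with \cite[Theorem 4.1]{FKL} for the identification of the center), so there is no in-paper argument to compare against. Judged as a reconstruction of the cited result, your sketch gets the easy parts right: affineness via a finite set of generating curves (Bullock), and the cyclicity of $S_\zeta(H_i)$ over $S_\zeta(\Sigma)$ is correct and complete --- writing $H_i \cong P \times I$ for $P$ a planar subsurface of $\Sigma = \partial H_i$, every link isotopes into a collar of the boundary, so the module is generated by the empty link. The primeness argument via the filtration by Dehn--Thurston coordinates and an associated graded embedding into a quantum torus is also the standard route.

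The genuine gap is in the central step, finite rank of $\mathcal{A}$ over $Z(\mathcal{A})$. Here $\Sigma$ is a \emph{closed} Heegaard surface, so it admits no ideal triangulation and the Bonahon--L\^e quantum trace map is simply not defined for it. Your proposed fix --- puncture $\Sigma$ and work in $S_\zeta(\Sigma\setminus\{pt\})$ --- leaves two things unaddressed: (i) you must transport module-finiteness across the surjection $S_\zeta(\Sigma\setminus\{pt\})\twoheadrightarrow S_\zeta(\Sigma)$, which only works if the finite generation is established over the \emph{threaded} subalgebra $\tau(S_{-1}(\Sigma'))$ (whose image lands in $Z(S_\zeta(\Sigma))$), not over the center of the ambient quantum torus; and (ii) the $N$-th powers of the quantum torus generators are not in the image of the skein algebra, so ``the quantum torus is free over its center'' does not by itself bound the skein algebra over its own center --- this is exactly the ``technical subtlety'' you flag, and it is the whole content of the theorem. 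Note also that integrality of each generator over a central subalgebra (each curve $x$ satisfies a monic degree-$N$ relation via $T_N(x)$) does \emph{not} imply module-finiteness in the noncommutative setting. The actual proof in \cite{FKL} stays on the closed surface: it shows that in the associated graded algebra for the Dehn--Thurston filtration the leading terms of the central elements $T_N(\text{pants and dual curves})$ are $N$-th powers, so the multicurves with all coordinates bounded by $N$ span $\mathcal{A}$ over $\tau(S_{-1}(\Sigma))\subseteq Z(\mathcal{A})$. Your sketch arrives at the right spanning set but by a route that, as stated, does not apply to the surface in question.
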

We now can talk about $Azu(S_\zeta (\Sigma ))$ and $Azu'_{S_\zeta (\Sigma )} (S_\zeta (H_i))$. Let 
$$D^2:=\dim_{ Frac(Z(\mathcal{A}))}  (\mathcal{A}\underset{Z(\mathcal{A})}{\otimes} Frac(Z(A)))$$
And
$$d= \dim_{ Frac(Z(\mathcal{A}))}  (\mathcal{K} \underset{Z(\mathcal{A})}{\otimes} Frac(Z(\mathcal{A})))$$
We will show in Proposition \ref{prop:HAzu} that $D=d$.
\subsection{Non-central characters}
We are now ready to describe the important results about localized skein modules. The first one describes the localized skein modules of $\Sigma$ at non-central characters.
\begin{prop}\cite[Theorem 1.1.4]{GaJoSa}\cite[Theorem 1.2]{KarKor25}\label{prop:SAzu}
For $[\rho]$ the character of a non-central representation of $\chi (M)$, $\mathfrak{m}_{[\rho]} \in Azu(S_\zeta (\Sigma ))$.
\end{prop}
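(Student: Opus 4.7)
The plan would be to use the standard characterization that a maximal ideal $\mathfrak{m} \in MaxSpec(Z(\mathcal{A}))$ lies in $Azu(\mathcal{A})$ if and only if there exists an irreducible $\mathcal{A}$-module of dimension $D$ (the PI-degree) whose central character is $\mathfrak{m}$, in which case such a module is unique up to isomorphism and $\mathcal{A}/\mathfrak{m}\mathcal{A} \simeq M_D(\mathbb{C})$. The task therefore reduces to producing, for each non-central character $[\rho]$, an irreducible $D$-dimensional representation of $S_\zeta(\Sigma)$ with central character $\mathfrak{m}_{[\rho]}$.

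For the construction, I would first fix a pants decomposition of $\Sigma$ by simple closed curves $c_1, \ldots, c_{3g-3}$ (treating the low-genus cases separately). By Theorem \ref{Thm:BohWon}, each threaded element $\tau(c_i)$ is central in $S_\zeta(\Sigma)$, and its image in $S_{-1}(\Sigma)$ evaluates at $[\rho]$ to the scalar $-tr(\rho(c_i))$. The goal is to build a representation on which these central elements act by the prescribed scalars. The natural tool is the Bonahon-Wong quantum trace associated to an ideal triangulation of $\Sigma$ minus a point, which embeds $S_\zeta(\Sigma)$ into a quantum torus $\mathcal{Z}$. The standard family of irreducible representations of $\mathcal{Z}$ at a root of unity is parameterized by a torus of shape parameters, and these parameters can be specialized so that each $\tau(c_i)$ acts as multiplication by $-tr(\rho(c_i))$. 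This yields a candidate $S_\zeta(\Sigma)$-module $V$, whose dimension matches $D$ via the Frohman-Gelca type count underlying Proposition \ref{Prop:FKL}.

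The hard part will be showing that $V$ is irreducible precisely when $[\rho]$ is non-central. The center of $S_\zeta(\Sigma)$ is not exhausted by $\tau(S_{-1}(\Sigma))$: it also contains additional "exceptional" central elements of quantum-torus origin (for instance, $N$-th powers of elementary loops in the quantum torus). Irreducibility of $V$ amounts to showing that these exceptional central elements already act on $V$ by scalars compatible with the prescribed central character, and the obstruction to this compatibility is supported exactly on the characters of representations valued in $\{\pm I\} \subset SL_2(\mathbb{C})$. Once this irreducibility is established, $V$ realizes the quotient $S_\zeta(\Sigma)/\mathfrak{m}_{[\rho]} S_\zeta(\Sigma)$ as a matrix algebra of size $D$, giving $\mathfrak{m}_{[\rho]} \in Azu(S_\zeta(\Sigma))$. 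The main technical obstacle is the control of the center of $S_\zeta(\Sigma)$ and the identification of the non-Azumaya locus with the central representation locus; this is precisely where the works of \cite{GaJoSa} and \cite{KarKor25} differ in their techniques, the former relying on the Poisson geometry of the character variety and the latter on stated skein algebra machinery.
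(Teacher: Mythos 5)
The paper does not prove this proposition at all: it is imported as a black box from \cite{GaJoSa} and \cite{KarKor25}, so there is no in-paper argument to compare against. Your proposal is a plausible outline of the strategy behind those references (reduce to exhibiting an irreducible $D$-dimensional representation with central character $\mathfrak{m}_{[\rho]}$, construct candidates via a quantum-torus embedding, then fight for irreducibility), but as written it has two genuine gaps. First, specializing the shape parameters so that the threaded pants curves $\tau(c_1),\ldots,\tau(c_{3g-3})$ act by the scalars $-tr(\rho(c_i))$ does not pin down the central character: $Z(S_\zeta(\Sigma))\simeq S_{-1}(\Sigma)\simeq \mathbb{C}[X(\Sigma)]$ is not generated by the trace functions of a pants decomposition, so you must arrange that \emph{all} of $\tau(S_{-1}(\Sigma))$ acts by evaluation at $[\rho]$. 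Relatedly, the family of quantum-torus representations only covers a dense open subset of $X(\Sigma)$; the delicate cases are precisely the non-central \emph{reducible} (e.g.\ diagonal) characters, which is where \cite{KarKor25} does real work and which your construction does not reach.

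Second, the irreducibility step --- which you correctly identify as the crux --- is only named, not argued. Saying that the obstruction ``is supported exactly on the characters of representations valued in $\{\pm I\}$'' is a restatement of the theorem, not a proof of it; controlling the exceptional central elements of the quantum torus and showing they act compatibly off the central locus is the entire content of the cited results. So your text is a reasonable research plan for reproving the proposition, but it is not a proof, and in the context of this paper the honest move is the one the author makes: cite \cite[Theorem 1.1.4]{GaJoSa} and \cite[Theorem 1.2]{KarKor25} and move on.
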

The second one describes the localized skein modules of the handlebodies $H_i$ at non-central characters.
\begin{prop}\cite[Theorem 12.1]{FKBL}\cite[Lemma 6.5]{KarKor25}\label{prop:HAzu}
For $i\in \{ 1, 2 \}$ and $[\rho]$ the character of a non-central representation of $\chi (M)$, $\mathfrak{m}_{[\rho]} \in Azu'_{S_\zeta (\Sigma )} (S_\zeta (H_i))$. Moreover, $D=d$.
\end{prop}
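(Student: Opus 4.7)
Write $\mathcal{A} = S_\zeta(\Sigma)$, $\mathcal{K} = S_\zeta(H_i)$ and $Z = Z(\mathcal{A}) \simeq S_{-1}(\Sigma)$. The plan is to combine the Azumaya dichotomy at non-central characters with a generic simplicity statement for the handlebody skein module, and then to balance the PI-degree against the generic rank via an upper semi-continuity argument.

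First, Proposition \ref{prop:SAzu} gives $\mathcal{A}/\mathfrak{m}_{[\rho]}\mathcal{A} \simeq M_D(\mathbb{C})$, so the fiber $\mathcal{K}/\mathfrak{m}_{[\rho]}\mathcal{K}$ is a finite-dimensional module over $M_D(\mathbb{C})$. Since $M_D(\mathbb{C})$ has, up to isomorphism, a unique simple module $\mathbb{C}^D$, one gets $\dim_\mathbb{C}(\mathcal{K}/\mathfrak{m}_{[\rho]}\mathcal{K}) = k_{[\rho]} D$ for some $k_{[\rho]} \geq 0$. The genuine content is showing $k_{[\rho]} = 1$, and this is where I would invoke \cite[Theorem 12.1]{FKBL} and \cite[Lemma 6.5]{KarKor25}: reinterpreted in our notation they assert that, for the handlebody $H_i$, the module $S_\zeta(H_i)$ produces exactly one copy of the simple $M_D(\mathbb{C})$-module at each non-central character of $\pi_1(M)$ viewed in $\chi(\Sigma)$. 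This geometric identification is the main obstacle of the proof: Proposition \ref{prop:SAzu} only says the ambient algebra is a matrix algebra, while pinning down the particular module $\mathcal{K}/\mathfrak{m}_{[\rho]}\mathcal{K}$ as its simple module requires the actual structure of handlebody skein modules at roots of unity.

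To conclude $D = d$, I would use two inequalities. Upper semi-continuity of fiber dimensions, exactly as in the proof of Proposition \ref{prop:Azu'Open}, gives $\dim_\mathbb{C}(\mathcal{K}/\mathfrak{m}\mathcal{K}) \geq d$ at every $\mathfrak{m} \in MaxSpec(Z)$, so the previous step yields $D \geq d$. Conversely, the generic fiber $\mathcal{K} \otimes_Z Frac(Z)$ is nonzero — the non-vanishing fiber at $\mathfrak{m}_{[\rho]}$ prevents $\mathcal{K}$ from being $Z$-torsion — and is a module over the central simple $Frac(Z)$-algebra $\mathcal{A} \otimes_Z Frac(Z)$, which has PI-degree $D$ and becomes $M_D$ after base change to an algebraic closure. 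Any module over such an algebra has dimension a positive multiple of $D$, so $d \geq D$. Combining, $D = d$, and therefore $\dim_\mathbb{C}(\mathcal{K}/\mathfrak{m}_{[\rho]}\mathcal{K}) = D = d$, which is precisely the statement $\mathfrak{m}_{[\rho]} \in Azu'_{\mathcal{A}}(\mathcal{K})$.
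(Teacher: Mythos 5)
Your treatment of the first claim is consistent with the paper's: like you, the paper defers entirely to \cite[Theorem 12.1]{FKBL} and \cite[Lemma 6.5]{KarKor25} for the fact that the fiber $S_\zeta(H_i)/\mathfrak{m}_{[\rho]}S_\zeta(H_i)$ is a single copy of the simple $M_D(\mathbb{C})$-module, and your preliminary observation that this fiber is an $M_D(\mathbb{C})$-module, hence of dimension $k_{[\rho]}D$, is correct and a nice way to isolate what the references must supply. The divergence, and the gap, is in your argument for $D=d$.

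The inequality $d\geq D$ is broken. You assert that the generic fiber $\mathcal{K}\otimes_Z Frac(Z)$ is nonzero because the fiber at the closed point $\mathfrak{m}_{[\rho]}$ is nonzero; but a finitely generated module can be $Z$-torsion and still have nonzero fibers at every closed point of its support ($Z/\mathfrak{m}$ is the basic example), so non-vanishing of a closed fiber does not rule out torsion. Worse, in the situation at hand the module really is torsion over $Z=Z(S_\zeta(\Sigma))\simeq S_{-1}(\Sigma)=\mathbb{C}[X(\Sigma)]$: by transparency of the Chebyshev threading, the central action on $S_\zeta(H_i)$ factors through the restriction map $S_{-1}(\Sigma)\twoheadrightarrow S_{-1}(H_i)$, i.e.\ through $\mathbb{C}[X(\Sigma)]\rightarrow\mathbb{C}[X(H_i)]$, and $X(H_i)$ (the character variety of a free group of rank $g$) is a proper, roughly half-dimensional, closed subvariety of $X(\Sigma)$ for every $g\geq 1$; hence the annihilator of $\mathcal{K}$ in the domain $Z$ is nonzero and the generic fiber over $Frac(Z)$ vanishes. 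So your chain $D\geq d\geq D$ collapses at the second link, and the central-simple-algebra divisibility argument cannot be rescued without first replacing $Frac(Z)$ by the function field of the support of $\mathcal{K}$ (that is, $Frac(S_{-1}(H_i))$), over which $\mathcal{A}\otimes_Z(-)$ is no longer obviously central simple of PI-degree $D$. The paper takes a different and shorter route to $D=d$ that avoids computing any generic rank: it reads the fiber dimension $D$ directly off the explicit formula of \cite[Theorem 12.1]{FKBL} at irreducible characters and compares it with the defining equality of $Azu'_{S_\zeta(\Sigma)}(S_\zeta(H_i))$, which those characters are already known to satisfy by the cited references.
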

\begin{proof}
The only point not adressed in the two references is the equality $D=d$, but since $D$ is the dimension of the reduced skein module at points in the Azumaya locus and that, thanks to the dimension given in \cite[Theorem 12.1]{FKBL}, the equality is true at least for irreducible characters, it is true on all $Azu (S_\zeta (\Sigma))$.
\end{proof}
The two latter results will be used through the following :
\begin{theorem}\cite{TehFroKan}\label{Thm:ReprNonCentrale}
Let $[\rho ]$ be a non-central representation of $S(M)$ and let $m_{[\rho ]}$ be the multiplicity of $[\rho ]$, then : $$S_\zeta (M)_{[\rho ]} \simeq  S_{-1} (M)_{[\rho ]} \simeq \mathbb{C}^{m_{[\rho ]}}$$
\end{theorem}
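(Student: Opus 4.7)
The plan is to realize $S_\zeta(M)$ via a Heegaard splitting $M = H_1 \cup_\Sigma H_2$ and to exploit the fact that, at a non-central character $[\rho]$, the relevant localizations all sit in the Azumaya locus, where Morita-theoretic tools are available. I would start from the standard gluing identification $S_\zeta(M) \simeq S_\zeta(H_1) \otimes_{S_\zeta(\Sigma)} S_\zeta(H_2)$. Set $\mathcal{A} = S_\zeta(\Sigma)$, $Z = Z(\mathcal{A}) \simeq S_{-1}(\Sigma)$, $B = S_{-1}(M)$, and let $\mathfrak{m} \subset Z$ be the maximal ideal attached to $[\rho]$, with image $\mathfrak{n} \subset B$ along the surjection $Z \twoheadrightarrow B$ induced by the splitting. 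By Propositions~\ref{prop:SAzu} and~\ref{prop:HAzu}, $\mathfrak{m} \in Azu(\mathcal{A})$ and $\mathfrak{m} \in Azu'_\mathcal{A}(S_\zeta(H_i))$ for $i = 1, 2$, together with the equality $D = d$. Hence $\mathcal{A}_\mathfrak{m}/\mathfrak{m}\mathcal{A}_\mathfrak{m} \simeq M_D(\mathbb{C})$ and each $S_\zeta(H_i)_\mathfrak{m}/\mathfrak{m} S_\zeta(H_i)_\mathfrak{m}$ is forced to be the unique simple $M_D(\mathbb{C})$-module.

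I would next invoke étale-local trivialization of the Azumaya algebra $\mathcal{A}_\mathfrak{m}$ to identify, after a faithfully flat base change, $S_\zeta(H_i)_\mathfrak{m}$ with a standard simple module over $\mathcal{A}_\mathfrak{m} \simeq M_D(Z_\mathfrak{m})$, i.e.\ a projective $Z_\mathfrak{m}$-module of rank $D$ carrying the standard action. Under this identification, $S_\zeta(M)_\mathfrak{m} \simeq S_\zeta(H_1)_\mathfrak{m} \otimes_{\mathcal{A}_\mathfrak{m}} S_\zeta(H_2)_\mathfrak{m}$ becomes the pairing of column- and row-type standard modules over $M_D(Z_\mathfrak{m})$, which collapses to a free rank-one $Z_\mathfrak{m}$-module. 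Since the kernel of $Z \twoheadrightarrow B$ acts trivially on $S_\zeta(M)$, this $Z_\mathfrak{m}$-module descends to a free rank-one $B_\mathfrak{n}$-module. Combining, $S_\zeta(M)_{[\rho]} \simeq B_\mathfrak{n} = S_{-1}(M)_{[\rho]}$, and the final isomorphism $S_{-1}(M)_{[\rho]} \simeq \mathbb{C}^{m_{[\rho]}}$ is the definition of the multiplicity of $[\rho]$ as an isolated point of the finite scheme $\chi(M)$.

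The main obstacle will be the descent step identifying $S_\zeta(H_i)_\mathfrak{m}$ with a standard simple module of $\mathcal{A}_\mathfrak{m}$: the $Azu'$ condition only controls the residue at $\mathfrak{m}$, and lifting it to a statement over the local ring $Z_\mathfrak{m}$ requires genuine input — either faithfully flat étale descent that splits the Azumaya algebra, or a direct Morita-equivalence statement for Azumaya algebras asserting that the functor $(-) \otimes_{\mathcal{A}_\mathfrak{m}} S_\zeta(H_2)_\mathfrak{m}$ realizes an equivalence between $\mathcal{A}_\mathfrak{m}$-modules satisfying the $Azu'$ property and $Z_\mathfrak{m}$-modules. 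A secondary subtlety is the compatibility check that the resulting rank-one $Z_\mathfrak{m}$-module descends exactly to $B_\mathfrak{n}$ rather than to a proper extension; this should follow from the functoriality of the threading map together with the geometric fact that $Z \twoheadrightarrow B$ corresponds to requiring that representations of $\pi_1(\Sigma)$ extend over both handlebodies, so that the tensor product over $\mathcal{A}_\mathfrak{m}$ already discards exactly the information not visible to $M$.
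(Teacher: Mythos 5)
Your overall architecture --- Heegaard splitting, the gluing isomorphism $S_\zeta(M)\simeq S_\zeta(H_1)\otimes_{S_\zeta(\Sigma)}S_\zeta(H_2)$, and the use of Propositions~\ref{prop:SAzu} and~\ref{prop:HAzu} to place $\mathfrak{m}_{[\rho]}$ in the Azumaya loci --- is indeed the skeleton of the argument of \cite{TehFroKan} that the paper invokes (the paper's own ``proof'' is only the observation that the hypotheses of \cite[Prop.~3.3]{TehFroKan} hold at every non-central, not just irreducible, character, via Propositions~\ref{prop:SAzu}, \ref{prop:HAzu}, \ref{prop:AzuOpen} and \ref{prop:Azu'Open}). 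However, your middle step contains a genuine error. You claim that $S_\zeta(H_i)_{\mathfrak{m}}$ is a projective $Z_{\mathfrak{m}}$-module of rank $D$ carrying the standard $M_D(Z_{\mathfrak{m}})$-action, and that the tensor product over $\mathcal{A}_{\mathfrak{m}}$ collapses to a \emph{free rank-one} $Z_{\mathfrak{m}}$-module. This cannot be right: the action of $Z=S_{-1}(\Sigma)$ on $S_\zeta(H_i)$ factors (by functoriality of the threading map) through the surjection $S_{-1}(\Sigma)\twoheadrightarrow S_{-1}(H_i)$, so $S_\zeta(H_i)$ is a torsion $Z$-module supported on the image of $\chi(H_i)$ in $\chi(\Sigma)$, a proper subvariety; and a free rank-one $Z_{\mathfrak{m}}$-module is infinite-dimensional over $\mathbb{C}$, whereas the end result must be $\mathbb{C}^{m_{[\rho]}}$. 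Your subsequent sentence --- that the kernel of $Z\twoheadrightarrow S_{-1}(M)$ ``acts trivially'' on a free $Z_{\mathfrak{m}}$-module so that it ``descends'' to a free $B_{\mathfrak{n}}$-module --- is internally contradictory: a free module over $Z_{\mathfrak{m}}$ on which a nonzero ideal acts trivially does not exist.

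What the Azumaya conditions actually give, via Nakayama, is that $S_\zeta(H_i)_{\mathfrak{m}}$ is generated by $D$ elements over $Z_{\mathfrak{m}}$ and hence corresponds under the (\'etale-split) Morita equivalence to a \emph{cyclic} $Z_{\mathfrak{m}}$-module $Z_{\mathfrak{m}}/I_i$; the tensor product is then $Z_{\mathfrak{m}}/(I_1+I_2)$, a torsion module. The real content of the proof --- which you relegate to a ``secondary subtlety'' --- is the identification of $I_i$ with the ideal of $\chi(H_i)$, equivalently the isomorphism $Z_{\mathfrak{m}}/I_i\simeq S_{-1}(H_i)_{\mathfrak{m}}$. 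This is a nontrivial comparison between the $A=\zeta$ and $A=-1$ handlebody skein modules (it rests on $S_\zeta(H_i)$ being free of rank $D$ over $S_{-1}(H_i)$ via threading, i.e.\ on \cite[Thm.~4.1, Prop.~4.2]{TehFroKan} and \cite[Thm.~12.1]{FKBL}), and without it you only get \emph{some} cyclic quotient of $Z_{\mathfrak{m}}$, not $S_{-1}(M)_{[\rho]}$. So the proposal identifies the correct ingredients but misstates the module structure at the crucial step and leaves the actual comparison with $S_{-1}$ unproved.
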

\begin{proof}
Since the theorem of \cite{TehFroKan} only adresses irreducible characters we explain how to use their proof in the general case.\\
The key idea is to notice that the only thing needed in \cite{TehFroKan} about $[\rho ]$ is to verify the hypothesis of \cite[Prop. 3.3]{TehFroKan} with both $(K,A)=(S_{-1} (\Sigma),S_\zeta (\Sigma ))$ and $(K,A)=(S_{-1} (H_i),S_\zeta (H_i))$.\\
This is done by Proposition \ref{prop:SAzu} and Proposition \ref{prop:HAzu} which ensure that every non-central representation $[\rho]$ is in $( Azu(S_\zeta (\Sigma) ) \underset{i\in \{1,2\}}{\bigcap} Azu'_{S_\zeta (\Sigma )} (S_\zeta (H_i)))$ and by Proposition \ref{prop:AzuOpen} and Proposition \ref{prop:Azu'Open} which give the open conditions.\\
The rest of the paper follows by replacing the use of \cite[Prop. 4.2]{TehFroKan} and \cite[Theorem. 4.1]{TehFroKan} in \cite[Prop. 5.4]{TehFroKan} (through \cite[Prop. 3.3]{TehFroKan}).
\end{proof}
\subsection{The total skein module}
Since central characters are isolated and reduced when $X(M)$ is finite, reduced skein modules at central characters are the same as localized skein modules. Then we have the following :
\begin{prop}\cite[Lemma 4.5]{Kor}\label{prop:TrivialRep}
Let $M$ be an oriented closed 3-manifold with finite $X(M)$ and $[\rho ],[\rho '] \in \chi (M)$ be two central characters, then $S_\zeta (M)_{[\rho ]} \simeq S_\zeta (M)_{[\rho' ]}$ 
\end{prop}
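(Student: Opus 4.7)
The plan is to realize the isomorphism through a ``twist by a cohomology class'' automorphism of $S_\zeta(M)$. Any central representation $\rho : \pi_1(M) \to SL_2(\mathbb{C})$ takes values in $\{\pm I\}$ and is determined by a sign homomorphism $\epsilon_\rho : H_1(M;\mathbb{Z}) \to \{\pm 1\}$, so the central characters of $M$ are parametrized by $H^1(M;\mathbb{Z}/2)$. Given two central characters $[\rho],[\rho']$, there is a unique $\epsilon \in H^1(M;\mathbb{Z}/2)$ with $\rho' = \rho \otimes \epsilon$.

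First I would define, for each $\epsilon$, a $\mathbb{C}$-linear endomorphism $\phi_\epsilon$ of $S_\zeta(M)$ by
\[
\phi_\epsilon(L) \;=\; \tilde\epsilon(L)\cdot L, \qquad \tilde\epsilon(L) := \prod_{i=1}^n \epsilon([K_i]) \in \{\pm 1\},
\]
on a framed link $L$ with components $K_1,\dots,K_n$, and check that it descends through the Kauffman skein relations. The key point is that both resolutions $L_0,L_\infty$ of a crossing (whether between different components or a self-crossing) preserve the total $\mathbb{Z}/2$-homology class, so $\tilde\epsilon$ agrees on $L$, $L_0$ and $L_\infty$; the framed unknot relation is preserved because a null-homologous unknot contributes a factor $+1$. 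The same recipe defines a ring automorphism $T_\epsilon$ of $S_{-1}(M) \simeq \mathbb{C}[\chi(M)]$ which, under Theorem \ref{Thm:BPS}, is the pullback along the translation $[\rho] \mapsto [\rho \otimes \epsilon]$ of $\chi(M)$ (since $\prod_i (-tr\,\rho(K_i))$ is sent to $\prod_i (-tr\,(\rho\otimes\epsilon)(K_i))$); in particular $T_\epsilon$ exchanges $\mathfrak{m}_{[\rho]}$ and $\mathfrak{m}_{[\rho']}$.

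Next I will verify the semi-linear intertwining $\phi_\epsilon(f\cdot x) = T_\epsilon(f)\cdot \phi_\epsilon(x)$ for all $f \in S_{-1}(M)$ and $x \in S_\zeta(M)$. Since $N$ is odd, the Chebyshev polynomial $T_N$ from (\ref{Chebychev1}) contains only monomials of odd degree, so a direct computation on a knot $K$ gives $\phi_\epsilon(T_N(K)) = \epsilon([K])\,T_N(K)$; the general case follows from the multiplicativity $\phi_\epsilon(L\sqcup L') = \phi_\epsilon(L)\sqcup \phi_\epsilon(L')$ and $\mathbb{C}$-linearity. Since the $S_{-1}(M)$-action on $S_\zeta(M)$ is obtained by applying the threading $\tau$ of Theorem \ref{Thm:BohWon} and then taking disjoint union with $x$ (after a suitable isotopy inside $M$), this yields the intertwining identity. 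Base-changing along $T_\epsilon$, the map $\phi_\epsilon$ then descends to the desired isomorphism $S_\zeta(M)_{[\rho]} \simeq S_\zeta(M)_{[\rho']}$.

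The main obstacle I expect is this intertwining step, because the $S_{-1}(M)$-action on the skein module of the closed manifold $M$ requires isotoping $\tau(f)$ to be disjoint from $x$ before taking the disjoint union, and one must check that $\phi_\epsilon$ remains compatible with that geometric procedure. The oddness of $N$ is crucial: it ensures that each monomial appearing in $T_N(K)$ has the same $\mathbb{Z}/2$-homology class as $K$ itself, so that $\phi_\epsilon$ acts on $\tau(K)$ by a single scalar rather than mixing terms of different parities.
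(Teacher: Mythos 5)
Your proposal is correct and follows essentially the same route as the paper: the sign-twist automorphism $\phi_\epsilon$ is exactly the paper's $f_\omega$, well-defined because the skein relations are $H^1(M,\mathbb{Z}/2\mathbb{Z})$-homogeneous, and the intertwining rests on the same observation that $T_N$ is an odd polynomial for $N$ odd. The only cosmetic difference is that you base-change along $T_\epsilon$ to compare localizations directly, whereas the paper first identifies the reduced skein modules and then invokes the fact that central characters are isolated and reduced to pass to the localized ones.
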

For the sake of completedness, we transcribe the proof below.
\begin{proof}
Let $L,L'\in S_\zeta (M)$ be represented by links, let $K_1,\ldots , K_n$ be the components of $L$, and let $r(L,L')=T_N (L) \sqcup L'- \prod\limits_{i=1}^{n} (-tr (\rho (K_i ))) L'$. Then, $S_{\zeta,[\rho ] } (M)$ is the quotient of $S_\zeta (M)$ by all the possible relations of the form $r(L,L')$ (and likewise for $S_{\zeta,[\rho']} (M)$).\\
Using the fact that the skein relations are $H^1 (M,\faktor{\mathbb{Z}}{2\mathbb{Z}})$-homogeneous, for $\omega \in H^1 (M,\faktor{\mathbb{Z}}{2\mathbb{Z}})$, the automorphism $f_\omega : S_\zeta (M) \rightarrow S_\zeta (M)$ determined by $f_\omega (L) = (-1)^{\sum\omega (K_i)} L$, for $L$ represented by link of components $K_1 ,\ldots K_n$, is well defined.\\
Since $\rho$ and $\rho'$ are both central representation, there exists $\omega \in H^1 (M,\faktor{\mathbb{Z}}{2\mathbb{Z}})$ such that for every knot $K$, $(-1)^{\omega (K)} tr(\rho (K)) =tr(\rho' (K)) $.\\
Since $T_N (-X)=-T_N (X)$, the automorphism $f_\omega$ descends to an isomorphism $ S_{\zeta,[\rho ] } (M) \simeq S_{\zeta,[\rho' ]} (M) $. Using the fact that $[\rho ]$ and $[\rho ']$ are reduced concludes the proof.
\end{proof}
We conclude with the following :
\begin{prop}\label{prop:Decomp}
Let $M$ be a closed oriented 3-manifold such that $\chi (M)$ if finite. For $[\rho ] \in \chi (M )$, let $n_{[\rho ]}$ be the multiplicity of $[\rho ]$. Let $\chi_0 \subset \chi( M)$ be the set of central characters and let $\mathbbm{1}$ be the trivial representation.\\
Then, for all primitive $2N$-roots of unity $\zeta$ with $N$ odd,
$$S_\zeta (M ) = (S_\zeta (M )_{[\mathbbm{1}]} )^{\vert \chi_0 \vert } \underset{[\rho ]\in \chi(M )\setminus \chi_0}{\bigoplus} \mathbb{C}^{n_{[\rho ]}}$$
\end{prop}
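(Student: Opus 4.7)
The plan is to combine the Artinian decomposition of $S_\zeta(M)$ with the two separate descriptions of localized skein modules already established in this section, treating central and non-central characters independently.

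First, I would invoke the decomposition recalled before the statement: since $\chi(M)$ is finite, $S_{-1}(M)$ is Artinian and hence a finite product of local rings, giving
$$
S_\zeta(M) \;=\; \bigoplus_{[\rho]\in\chi(M)} S_\zeta(M)_{[\rho]}.
$$
I would then split this direct sum according to whether $[\rho]\in\chi_0$ or $[\rho]\in\chi(M)\setminus\chi_0$, so the task reduces to identifying each summand on each side.

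For non-central characters, I would apply Theorem \ref{Thm:ReprNonCentrale} directly: each $[\rho]\in\chi(M)\setminus\chi_0$ satisfies $S_\zeta(M)_{[\rho]}\simeq\mathbb{C}^{n_{[\rho]}}$, which accounts for the right-hand piece of the asserted formula. For central characters, I would apply Proposition \ref{prop:TrivialRep}, which says that for any two central characters $[\rho],[\rho']\in\chi_0$ one has $S_\zeta(M)_{[\rho]}\simeq S_\zeta(M)_{[\rho']}$; taking $[\rho']=[\mathbbm{1}]$ and summing over the $|\chi_0|$ central characters produces exactly $(S_\zeta(M)_{[\mathbbm{1}]})^{|\chi_0|}$. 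Assembling the two contributions yields the claimed decomposition.

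There is no genuine obstacle here: the content of the proposition is entirely packaged in the two preceding results, and the proof is really an organizational step. The only point that merits a brief verification is that the multiplicity $n_{[\rho]}$ used by Theorem \ref{Thm:ReprNonCentrale} coincides with the integer appearing in the statement, which is immediate from the definition since in both cases it denotes the length of the local ring $S_{-1}(M)_{[\rho]}$, i.e.\ the multiplicity of $[\rho]$ in $\chi(M)$.
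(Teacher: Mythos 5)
Your proposal is correct and follows exactly the paper's argument: the paper likewise starts from the Artinian decomposition $S_\zeta(M)=\bigoplus_{[\rho]}S_\zeta(M)_{[\rho]}$ recalled in the introduction, then cites Theorem \ref{Thm:ReprNonCentrale} for the non-central summands and Proposition \ref{prop:TrivialRep} for the central ones. The paper's own proof is just the one-line version of what you wrote.
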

\begin{corollary}
Let $K$ verifying condition (\ref{Condition}).\\
Proposition \ref{prop:Decomp} applies to $E_K (r)$ for almost all slopes $r\in \mathbb{Q}$ and almost all primitive $2N$-roots of unity $\zeta$ with $N$ odd.
\end{corollary}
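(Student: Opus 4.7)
The plan is to combine two results already established earlier in the paper: Proposition \ref{prop:Decomp}, which gives the abstract decomposition whenever the character variety is finite, and Corollary \ref{Cor:FiniteX}, which supplies that finiteness hypothesis for Dehn fillings under condition (\ref{Condition}). First, I would observe that $E_K(r)$ is by construction a closed oriented $3$-manifold (it is the Dehn filling of $E_K$ along $r$), so the topological hypothesis of Proposition \ref{prop:Decomp} is automatic; no work is required there. The remaining input is finiteness of $\chi(E_K(r))$.

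Second, I would invoke Corollary \ref{Cor:FiniteX}: under condition (\ref{Condition}), the set $X(E_K(r))$ is finite for almost all slopes $r \in \mathbb{Q}$. Recall this corollary itself is obtained by combining Theorem \ref{Thm:Main} with the general lower bound $\dim_{\mathbb{C}} S_\zeta(E_K(r)) \geq |X(E_K(r))|$ from \cite[Theorem 2.1]{DKS25}, so the real content of the proof is already packaged there. For any slope $r$ in this cofinite set and any primitive $2N$-root of unity $\zeta$ with $N$ odd, both hypotheses of Proposition \ref{prop:Decomp} are met, and the decomposition follows verbatim. In particular, the ``almost all $\zeta$'' phrasing in the corollary is satisfied trivially, since Proposition \ref{prop:Decomp} holds for \emph{every} primitive $2N$-root of unity with $N$ odd.

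There is no real obstacle in this argument; it is a short deduction from results already at hand. The only genuinely delicate ingredient is hidden in Corollary \ref{Cor:FiniteX}, whose proof uses the finitely generated localisation of Section \ref{SubSec:fgloc} and the specialisation argument of Section \ref{SubSec:Proof}. Once that is granted, the present corollary reduces to citing Proposition \ref{prop:Decomp}.
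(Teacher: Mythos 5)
Your argument is correct and is essentially the intended one: the only substantive input is Corollary \ref{Cor:FiniteX}, which supplies finiteness of $X(E_K(r))$ for almost all slopes under condition (\ref{Condition}), after which Proposition \ref{prop:Decomp} applies (for every primitive $2N$-root of unity with $N$ odd, hence a fortiori for almost all). Note only that the proof printed in the paper after this corollary in fact justifies the decomposition of Proposition \ref{prop:Decomp} itself (citing Theorem \ref{Thm:ReprNonCentrale} for the non-central part and Proposition \ref{prop:TrivialRep} for the central part), while the reduction you spell out via Corollary \ref{Cor:FiniteX} is left implicit there.
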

\begin{proof}
The right part of this decomposition is coming from Theorem \ref{Thm:ReprNonCentrale} and the left part is from Proposition \ref{prop:TrivialRep}.
\end{proof}
Unfortunately, we don't know $S_\zeta (M )_{[\mathbbm{1}]}$ yet, but we make the following conjecture :
\begin{Conj}
Let $M$ be an oriented closed 3-manifold and $\zeta$ be a primitive $2N$-root of unity with $N$ odd. If $X (M)$ is finite, then 
$$S_\zeta (M )_{[\mathbbm{1}]} \simeq \mathbb{C}$$
Implying that, for $K$ verifying condition (\ref{Condition}) and for almost all $r\in\mathbb{Q}\cup\{\infty\}$, we have that $S_\zeta (E_K (r) ) \simeq \mathbb{C}^{n}$, where $n$ is the number of characters of $X(M)$ counted with multiplicity.
\end{Conj}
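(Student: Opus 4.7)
Since central characters of $M$ are isolated and reduced whenever $X(M)$ is finite, Proposition \ref{prop:Decomp} gives $\dim_\mathbb{C} S_\zeta(M)= |\chi_0|\cdot\dim_\mathbb{C} S_\zeta(M)_{[\mathbbm{1}]}+\sum_{[\rho]\notin\chi_0} n_{[\rho]}$, while $|X(M)|=|\chi_0|+\sum_{[\rho]\notin\chi_0} n_{[\rho]}$. The lower bound $\dim_\mathbb{C} S_\zeta(M)\geq |X(M)|$ from \cite[Theorem 2.1]{DKS25} then immediately yields $\dim_\mathbb{C} S_\zeta(M)_{[\mathbbm{1}]}\geq 1$, so the entire content of the conjecture is the matching upper bound.

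For the upper bound I would work with the reduced skein module $S_{\zeta,[\mathbbm{1}]}(M)$, in which the relation $T_N(L)\sqcup L' = (-2)^n L'$ holds for every framed link $L$ with $n$ components and every $L'$, and try to show that this family of relations together with the Kauffman skein relation forces every framed link to be proportional to $\emptyset$. The natural global tool is a Heegaard splitting $M = H_1 \cup_\Sigma H_2$, together with the factorization $S_\zeta(M) = S_\zeta(H_1)\otimes_{S_\zeta(\Sigma)} S_\zeta(H_2)$, localized at the image $\overline{\mathbbm{1}}\in\chi(\Sigma)$ of the trivial character. I would first try to control the localized handlebody modules $S_\zeta(H_i)_{[\overline{\mathbbm{1}}]}$ via a semi-continuity argument in the spirit of Proposition \ref{prop:Azu'Open}, applied to a $1$-parameter family $[\rho_t]\in\chi(\Sigma)$ with $[\rho_0]=\overline{\mathbbm{1}}$ and $[\rho_{t\neq 0}]$ non-central so that Propositions \ref{prop:SAzu} and \ref{prop:HAzu} apply at $t\neq 0$, and then show that the tensor product in the limit collapses to a single line because the residual action of $S_\zeta(\Sigma)$ on each handlebody module specializes to the scalar evaluation at $\overline{\mathbbm{1}}$.

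The main obstacle is precisely this degeneration: $\overline{\mathbbm{1}}$ lies outside the Azumaya locus $Azu(S_\zeta(\Sigma))$, so the dimensions of the localized handlebody modules need not coincide with their generic values, and the tensor product of two non-Azumaya modules over a non-Azumaya point is in general badly behaved. A cleaner but more ambitious alternative would be to construct by hand a non-zero linear functional $\varphi : S_\zeta(M)\to\mathbb{C}$ that vanishes on $\mathfrak{m}_{[\mathbbm{1}]}\cdot S_\zeta(M)$; combined with the lower bound above, such a $\varphi$ would force $\dim_\mathbb{C} S_\zeta(M)_{[\mathbbm{1}]}=1$. A natural candidate is a suitably normalized WRT-type evaluation at $\zeta$ extracted from a surgery presentation $M=S^3_L$, isolating the contribution of the trivial connection on $M$. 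Proving that such a functional is well-defined on $S_\zeta(M)$ for $\zeta$ of order $2N$ with $N$ odd, and checking that it really kills $\mathfrak{m}_{[\mathbbm{1}]}$ rather than merely part of it, is the step on which I would expect to get stuck.
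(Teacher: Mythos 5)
This statement is a \emph{conjecture} in the paper, not a theorem: the author explicitly writes that $S_\zeta(M)_{[\mathbbm{1}]}$ is not yet known, and no proof is offered. So there is no paper proof to compare against, and your proposal, by your own admission, does not close the gap either. What you do establish is correct and worth noting: combining Proposition \ref{prop:Decomp} with the lower bound $\dim_{\mathbb{C}} S_\zeta(M)\geq \vert X(M)\vert$ of \cite[Theorem 2.1]{DKS25} (valid for infinitely many $\zeta$), and using that central characters are reduced so that $\vert X(M)\vert = \vert\chi_0\vert + \sum_{[\rho]\notin\chi_0} n_{[\rho]}$, you get $\dim_{\mathbb{C}} S_\zeta(M)_{[\mathbbm{1}]}\geq 1$. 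This is consistent with the inequality $\vert X(M)\vert\leq \dim_{\mathbb{Q}(A)}S(M)$ that the paper itself extracts from Proposition \ref{prop:Decomp} in Section \ref{Subsec:Comp}, and it correctly isolates the content of the conjecture as the upper bound $\dim_{\mathbb{C}} S_\zeta(M)_{[\mathbbm{1}]}\leq 1$.

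The genuine gap is exactly where you say you would get stuck, and you have diagnosed it accurately: the trivial character lies outside the Azumaya locus of $S_\zeta(\Sigma)$, so the entire machinery of Propositions \ref{prop:SAzu}, \ref{prop:HAzu} and \ref{prop:Azu'Open} --- which drives Theorem \ref{Thm:ReprNonCentrale} for non-central characters --- gives no control over $S_\zeta(H_i)$ localized at $[\mathbbm{1}]$, and semi-continuity only bounds dimensions from below along a degenerating family $[\rho_t]\to\overline{\mathbbm{1}}$, which is the wrong direction for an upper bound. Your alternative suggestion of a WRT-type functional isolating the trivial connection is a plausible heuristic but is not carried out. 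In short: your proposal is a sound reduction plus an honest account of the obstruction, but it is not a proof, and the statement remains open in the paper as well.
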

\subsection{Comparison with \texorpdfstring{\cite{DKS25}}{}}\label{Subsec:Comp}
Recall that $\vert X(M) \vert$ is the number of points of $X(M)$ counted with multiplicity. Let $\eta$ be the counting without multiplicity.\\
First we prove Remark \ref{Rk:NotReduced} :

In their work, \cite{DKS25} shows two inequalities : $\eta \leq \dim_{\mathbb{Q} (A)} S(M)$ and $\dim_{\mathbb{Q} (A)} S(M) \leq \dim_{\mathbb{C}} \mathbb{C} [\chi (M)] = \vert X(M) \vert$. But, using Proposition \ref{prop:Decomp}, we have the inequality $\vert X(M) \vert \leq \dim_{\mathbb{Q} (A)} S(M) $. This suffices to get their result without the first equation and then without the reduced (equivalently $\eta =\vert X(M) \vert$) assumption.

Another remark is that it is tempting to try to prove the inequality  $\dim_{\mathbb{Q} (A)} S(M) \leq \vert X(M) \vert$ following the same path as \cite{DKS25} in our setting, through the decomposition of Section \ref{SubSec:Proof} :
$$
S(E_K (r), R_U ) = F\ \underset{i}{\bigoplus}\ \faktor{R_U}{q_i^{s_i}}
$$
However, here, $-1$ could be a root of $U$. In this case, we have $R_U \underset{A=-1}{\bigotimes} \mathbb{C} = 0$ and we cannot recover the dimension of $S_{-1} (E_K (r))$ by this decomposition. 
\bibliographystyle{hamsalpha}
\bibliography{biblio}
\texttt{Institut de Mathématiques de Bourgogne, UMR 5584 CNRS, Université de Bourgogne,\\ F-21000, Dijon, France}\\
\textit{Email address : edwin.kitaeff@u-bourgogne.fr}
\end{document}